\newtheorem{theorem}{Theorem}[subsection]
\newtheorem{lemma}[theorem]{Lemma}
\newtheorem{proposition}[theorem]{Proposition}
\newtheorem{prop-and-notation}[theorem]{Proposition and Notation}
\newtheorem{corollary}[theorem]{Corollary}
\theoremstyle{definition}
\newtheorem{definition}[theorem]{Definition}
\newtheorem{definition-and-remark}[theorem]{Definition and Remark}
\newtheorem{notation}[theorem]{Notation}
\newtheorem{notation-and-remark}[theorem]{Notation and Remark}
\newtheorem{remark-and-notation}[theorem]{Remark and Notation}
\newtheorem{remark}[theorem]{Remark}
\newtheorem{outline}[theorem]{Outline}
\newtheorem{example}[theorem]{Example}
\def\greyscale{1}
	\definecolor{rline}{rgb}{0 0 0}
	\definecolor{rline}{rgb}{1 0 0}
\newcommand{\cA}{ {\mathcal A} }
\newcommand{\bC}{ {\mathbb C} }
\newcommand{\cD}{ {\mathcal D} }
\newcommand{\bE}{ {\mathbb E} }
\newcommand{\cI}{ {\mathcal I} }
\newcommand{\bN}{ {\mathbb N} }
\newcommand{\BNC}{ \mathrm{BNC} }
\newcommand{\NC}{ \mathrm{NC} }
\newcommand{\cP}{{\mathcal P}}
\newcommand{\bR}{ {\mathbb R} }
\newcommand{\cR}{{\mathcal R}}
\newcommand{\cS}{{\mathcal S}}
\newcommand{\cT}{{\mathcal T}}
\newcommand{\cTalg}{ {\mathcal T}^{( \mathrm{alg} )} }
\newcommand{\cV}{{\mathcal V}}
\newcommand{\bZ}{ {\mathbb Z} }
\newcommand{\xR}{ Q }
\newcommand{\xlambda}{ \gamma }
\newcommand{\xLambda}{ \Gamma }
\newcommand{\xsigma}{ \theta }
\newcommand{\ecpi}{ \stackrel{\pi}{\sim} }
\newcommand{\ecsigma}{ \stackrel{\sigma}{\sim} }
\newcommand{\xivac}{ \xi_{\mathrm{vac}} }
\newcommand{\phivac}{ \varphi_{\mathrm{vac}} }
\newcommand{\alt}{ \mathrm{alt} }
\newcommand{\term}{ \mathrm{term} }
\newcommand{\tr}{ \mathrm{tr} }
\newcommand{\mek}{ m^{(k)} }
\newcommand{\smek}{ r^{(k)} }
\newcommand{\h}{ \underline{s}_n}
\newcommand{\Ht}{ \underline{s} }
\newcommand{\Height}{ \text{Ht} }
\newcommand{\Ker}{ \text{Ker} }
\newcommand{\Cr}{\text{cr}}
\newcommand{\ee}{\varepsilon}
\newcommand{\Choice}{ \mathrm{Choice} }
\newcommand{\Skipped}{ \mathrm{Skipped} }
\begin{document}

\title[Operator related to semi-meanders, 
via two-sided $q$-Wick formula]{An operator that relates to 
semi-meander polynomials via a two-sided $q$-Wick formula}

\author{Alexandru Nica}
\thanks{Research supported by a Discovery Grant from NSERC, Canada.}
\address{Alexandru Nica: 
Department of Pure Mathematics, University of Waterloo, 
\newline
Waterloo, Ontario, N2L 3G1, Canada.}
\email{anica@uwaterloo.ca}

\author{Ping Zhong}
\address{Ping Zhong:
Department of Pure Mathematics, University of Waterloo, 
\newline
Waterloo, Ontario, N2L 3G1, Canada, and
\newline
School of Mathematics and Statistics, Wuhan University,
Wuhan, Hubei 430072, China.}
\email{ping.zhong@uwaterloo.ca}

\begin{abstract}
We consider the sequence $( Q_n )_{n=1}^{\infty}$ of semi-meander
polynomials which are used in the enumeration of semi-meandric systems
(a family of diagrams related to the classical stamp-folding problem).
We show that for a fixed $d \in \bN$, $( Q_n (d) )_{n=1}^{\infty}$ 
appears as sequence of moments for a compactly supported probability 
measure $\nu_d$ on $\bR$.  More generally, we consider a sequence of 
two-variable polynomials $( \widetilde{Q}_n )_{n=1}^{\infty}$ related 
to a natural concept of ``self-intersecting semi-meandric system'', 
where the second variable of $\widetilde{Q}_n$ keeps track of the 
crossings of such a system; one has, in particular, that 
$Q_n (t) = \widetilde{Q}_n (t,0)$.  We prove that for fixed $d \in \bN$ 
and $q \in (-1,1)$, $( \widetilde{Q}_n (d,q) )_{n=1}^{\infty}$ can be 
identified as sequence of moments for a compactly supported probability 
measure $\nu_{d;q}$ on $\bR$.  The measure $\nu_{d;q}$ is found as 
scalar spectral measure for an operator $T_{d;q}$ constructed by using 
left and right creation/annihilation operators on the $q$-Fock space 
over $\bC^d$, a deformation of the full Fock space over $\bC^d$ 
introduced by Bo$\dot{\text{z}}$ejko and Speicher.  The relevant 
calculations of moments for $T_{d;q}$ are made by using a two-sided 
version of a (previously studied in the one-sided case) ``$q$-Wick 
formula'', which involves the number of crossings of a pair-partition.
\end{abstract}

\maketitle

\section{Introduction}

A {\em meandric system} of order $n$ is a picture obtained by 
independently drawing two non-crossing pair-partitions (a.k.a. 
``arch-diagrams'') of $\{ 1, \ldots , 2n \}$, one of them above 
and the other one below a horizontal line, as exemplified 
in Figure 1.  The combined arches of the two non-crossing 
pair-partitions create a family of disjoint closed curves which wind 
up and down the horizontal line.  If this family consists of only
one curve going through all of $1, \ldots , 2n$, then 
the meandric system in question is called a {\em meander}.  

$\ $

\begin{center}
  \setlength{\unitlength}{0.3cm}
  \begin{picture}(16,9)
  \thicklines
  \put(1,4){\line(1,0){10}}
  \put(1,4){\oval(2,3)[t]}
  \put(-3,4){\oval(2,3)[t]}
  \put(-1,4){\oval(10,10)[t]}
  \put(1,4){\oval(2,3)[b]}
  \put(1,4){\oval(6,6)[b]}
  \put(1,4){\oval(10,10)[b]}
  \put(1,4){\oval(14,14)[b]}
  \put(7,4){\oval(2,3)[t]}
  \put(1,4){\line(-1,0){10}}
  \end{picture}
\hspace{2cm}
  \begin{picture}(5,9)
  \thicklines
  \put(1,4){\line(1,0){10}}
  \put(3,4){\oval(2,3)[t]}
  \put(3,4){\oval(6,6)[t]}
  \put(3,4){\oval(10,10)[t]}
  \put(1,4){\oval(2,3)[b]}
  \put(1,4){\oval(6,6)[b]}
  \put(1,4){\oval(10,10)[b]}
  \put(1,4){\oval(14,14)[b]}
  \put(-5,4){\oval(2,3)[t]}
  \put(1,4){\line(-1,0){10}}
  \end{picture}


$\ $

$\ $

{\bf Figure 1.}  {\em Two meandric systems of order $4$,}

{\em where one of them (on the right) is a meander.}
\end{center}

$\ $

For every $n \in \bN$, a meandric system of order $n$ has
at least $1$ and at most $n$ components; for 
$k \in \{ 1, \ldots , n \}$, we will denote by $\mek_n$
the number of such meandric systems which have $k$ components.
The polynomial 
\begin{equation}    \label{eqn:intro1}
P_n (t)  = \sum_{k=1}^n \mek_n t^k
\end{equation}
is called the $n$th {\em meander polynomial}.  (For instance
$P_1 (t) = t, \ P_2 (t) = 2t + 2t^2$.
Meander polynomials up to degree $12$ can be found in  
\cite[Section 2.3]{DFGG1997}.)  

The problem of enumerating meandric systems, which amounts 
to understanding the coefficients of the above sequence of 
$P_n$'s, turns out to be difficult, and has received a 
substantial amount of interest from the mathematics 
and physics communities; see e.g. Section 4 of the survey 
paper \cite{DF2000}.

An interesting feature of the meander polynomials (\ref{eqn:intro1}) 
is that for certain values of $t \in \bR$, the numerical sequence
$( P_n (t) )_{n=1}^{\infty}$ can be identified as moment sequence 
for a compactly supported probability measure $\mu_t$ on $\bR$.
The typical way of finding the measure $\mu_t$ is as a scalar 
spectral measure for some bounded selfadjoint operator in a 
$C^{*}$-probability space.  The largest range of $t$'s for which 
this can be done appears to be 
$\{ 2 \cos \frac{\pi}{n} \mid n \geq 3 \} \cup [2, \infty )$,
as found in \cite[Section 3]{CJS2014} by using an operator model 
which lives in a planar algebra.  The considerations of the present 
paper bear an analogy with a simpler operator model, which works 
only for integer values of $t$, and was described in \cite{N2016} 
in terms of a free semicircular system of operators, following the 
idea of a random matrix model from \cite{DFGG1997, M1995}.  More 
precisely: for $t=d \in \bN$, the probability measure $\mu_d$ on 
$\bR$ determined uniquely by the moment conditions
\begin{equation}   \label{eqn:intro2}
\int_{\bR} x^n \, d \mu_d (x) = P_n (d)
= \sum_{k=1}^n \mek_n d^k ,
\ \ \forall \, n \in \bN 
\end{equation}
can be described as follows.  We start with
a free family $a_1, \ldots , a_d$ of 
selfadjoint elements in a $C^{*}$-probability space 
$( \cA , \varphi )$, such that every $a_i$ ($1 \leq i \leq d$) 
has centred semicircular distribution of variance $1$.  We then
consider the $C^{*}$-probability space 
$( \cA \otimes \cA , \varphi \otimes \varphi)$, and the 
positive element
\begin{equation}    \label{eqn:intro3}
X_d = (a_1 \otimes a_1  + a_2 \otimes a_2 + \cdots + 
     a_d \otimes a_d)^2 \in \cA \otimes \cA .
\end{equation}
For every $n \in \bN$, the moment 
$( \varphi \otimes \varphi ) (X_d^n)$ turns out to be 
equal to $P_n (d)$ (cf. \cite{N2016}, Proposition 5.9).
Thus the scalar spectral distribution of $X_d$ with respect to 
$\varphi \otimes \varphi$ is precisely the probability measure 
$\mu_d$ from Equation (\ref{eqn:intro2}). 

$\ $

In this paper we study the analogue of (\ref{eqn:intro2})
for {\em semi-meander polynomials}, a sequence of polynomials 
$(\xR_n)_{n=1}^{\infty}$ which are used in connection to the 
enumeration of {\em semi-meandric systems}.  There are several 
equivalent descriptions for what is a semi-meandric system (see
e.g. Section 2.2 of \cite{DFGG1997}).  For our purposes, the most 
convenient way to look at these objects is by identifying them 
with a special class of meandric systems.  More precisely, for 
every $n \in \bN$ let us consider the {\em rainbow} pair-partition
\begin{equation}    \label{eqn:intro4}
\rho_{2n}  := \{ \{1, 2n\}, \{2, 2n-1\}, \ldots, \{n, n+1\} \}, 
\end{equation}
and let $\cR_n$ denote the set of meandric systems of order $n$ 
for which the pair-partition under the horizontal line is constrained
to equal $\rho_{2n}$.  (For example, both the meandric systems 
shown in Figure 1 are from $\cR_4$.)  For every $n \in \bN$ and 
$k \in \{ 1, \ldots , n \}$ we will denote by $\smek_n$ the number of 
meandric systems in $\cR_n$ which have exactly $k$ components.  The
polynomial 
\begin{equation}   \label{eqn:intro5}
\xR_n (t) := \sum_{k=1}^n \smek_n t^k
\end{equation}
is called the $n$th {\em semi-meander polynomial}.  (For instance
$\xR_1 (t)=t, \xR_2(t)=t+t^2$.
A table with numerical data concerning the numbers $\smek_n$ 
can be found in \cite[Section 2.3]{DFGG1997}.)
 
On a historical note, we mention that the meandric systems in $\cR_n$
truly deserve special attention, due to their connection to the old 
(but still open, to our knowledge) problem of enumerating the foldings
of a strip of stamps, which can be tracked back to the treatise on
number theory by Lucas \cite{L1891}.  More precisely: the foldings of a 
strip of $n$ stamps where stamp no.1 stays on top of the folding are 
in bijective correspondence with the connected meandric systems in 
$\cR_n$, and are thus counted by the linear coefficient $r_n^{(1)}$
of the polynomial $Q_n$.  As the topic of the present paper is not 
directly related to foldings, we will not elaborate on that but rather
refer the reader to \cite[Section 2]{DFGG1997} (see also Sections 2 
and 5 of the survey paper \cite{L2013}) for the details of this 
bijective correspondence.

In the present paper we show that, analogously to the discussion 
for meander polynomials: when we set $t = d \in \bN$,
$( Q_n (d) )_{n=1}^{\infty}$ is the moment sequence of a compactly 
supported probability measure $\nu_d$ on $\bR$ which is related to 
free probability.  More precisely, we find $\nu_d$ as the distribution 
of a selfadjoint operator $T_d$ which arises naturally in the framework 
of two-faced free probability theory introduced by Voiculescu in 
\cite{V2014} (a survey of this direction of research, and many references, 
can be found in the expository paper \cite{V2016}).  In order to describe 
$T_d$, we review a bit of terminology concerning creation and annihilation 
operators on the full Fock space $\cT_d$ over $\bC^d$: let 
$\phivac : B( \cT_d ) \to \bC$ denote the vacuum-state on $B( \cT_d ) $, 
and let $L_1, \ldots , L_d$ and $R_1, \ldots , R_d$ denote the 
left and respectively right creation operators on $\cT_d$ associated to 
the vectors $e_1, \ldots , e_d$ from the standard orthonormal basis of 
$\bC^d$.  (These notations are reviewed in more detail in Section 3 below.)

$\ $

{\bf Proposition 1.1.} 
{\em In the framework just described, we put
\begin{equation}   \label{eqn:intro6}
T_d := \sum_{i=1}^d ( L_i + L_i^{*} ) (R_i + R_i^{*} ) \in B( \cT_d ).
\end{equation}
Then $T_d = T_d^{*}$, and has
\begin{equation}   \label{eqn:intro7}
\phivac ( T_d^n ) = Q_n (d), \ \ \forall \, n \in \bN .
\end{equation}  }

$\ $

{\bf Corollary 1.2.} {\em For every $d \in \bN$, 
$( Q_n (d) )_{n=1}^{\infty}$ appears as sequence of moments for 
a compactly supported probability measure $\nu_d$ on $\bR$.
The measure $\nu_d$ can be found as the distribution of the 
operator $T_d$ from Proposition 1.1 with respect to the 
vacuum-state on the full Fock space $\cT_d$. }

$\ $

In the framework of Proposition 1.1, it can be easily verified 
that, for every $1 \leq i \leq d$, the operator 
$( L_i + L_i^{*} ) (R_i + R_i^{*} )$ is selfadjoint and has 
Marchenko-Pastur distribution (the free analogue of the standard 
Poisson distribution) with respect to the vacuum-state.
Thus the operator $T_d$ from (\ref{eqn:intro6}) is the sum of 
$d$ elements with free Poisson distributions -- however, these 
$d$ elements are not ``independent'' in the sense of some 
non-commutative probability theory 
(so it is not clear if moment-cumulant methods could be applied to 
calculate the distribution of $T_d$ by starting from the distribution 
of its $d$ summands). 

We will obtain Proposition 1.1 as the special case of a more general 
result, where we look at a two-variable generalization of the 
semi-meander polynomials $\xR_n$.  Suppose that in the construction 
of a meandric system of order $n$ we actually allow the two 
pair-partitions drawn above and below the horizontal line to run 
in the full set $\cP_2 (2n)$ of all pair-partitions of 
$\{ 1, \ldots , 2n \}$ (that is, we give up the non-crossing 
requirement).  We then get a larger collection of pictures, 
which could be called {\em self-intersecting meandric systems}; 
some examples of such pictures are shown in Figure 2.  A 
self-intersecting meandric system of order $n$ consists of a 
family of $k$ closed curves, with $1 \leq k \leq n$, and where 
now we may also have a number of {\em crossings} in between these curves 
(including the possibility that one of the curves crosses itself, or the 
possibility that two of the curves cross each other multiple times).  
The total number of crossings of a self-intersecting 
meandric system of order $n$ is at least $0$ and at most $n^2 - n$, where
the upper bound is found by noting that every crossing arises either above 
or below the horizontal line, and there can be at most $n$-choose-$2$
crossings of each of these two kinds.  The usual ``meandric systems'' 
discussed above are retrieved, of course, as the self-intersecting 
meandric systems which have $0$ crossings.  (The formal definitions 
of the notions introduced in this paragraph appear in Section 3.4 below 
-- including precise formulas for what what we mean by 
``number of closed curves'' and by ``number of crossings'' for a 
self-intersecting meandric system of order $n$.)

$\ $

\begin{center}
  \setlength{\unitlength}{0.3cm}
  \begin{picture}(16,9)
  \thicklines
  \put(1,4){\line(1,0){10}}   
    \put(-4,4){\oval(4,3)[t]}
        \put(3,4){\oval(2,3)[t]}
            \put(4,4){\oval(8,6)[t]}
                \put(1,4){\oval(10,9)[t]}
                
  \put(-2,4){\oval(4,3)[b]}
  \put(7,4){\oval(2,3)[b]}
  \put(0,4){\oval(4,6)[b]}
  \put(-1,4){\oval(10,9)[b]}
  \put(1,4){\line(-1,0){10}}
  \end{picture}
\hspace{2cm}
  \begin{picture}(5,9)
 \thicklines
  \put(1,4){\line(1,0){10}}
    
    \put(0,4){\oval(4,3)[t]}
        \put(0,4){\oval(8,6)[t]}
            \put(0,4){\oval(12,9)[t]}
                \put(4,4){\oval(8,12)[t]}
  \put(1,4){\oval(2,3)[b]}
  \put(1,4){\oval(6,6)[b]}
  \put(1,4){\oval(10,10)[b]}
  \put(1,4){\oval(14,14)[b]}
  \put(1,4){\line(-1,0){10}}
  \end{picture}


$\ $

$\ $

{\bf Figure 2.}  
{\em Two self-intersecting meandric systems of 
order $4$, one with  \\
$2$ closed curves and $3$ crossings, and the other 
with $1$ closed curve and $3$ crossings. }
\end{center}

$\ $

{\bf Notation 1.3.}  
(1) For every $n \in \bN$ we let $\widetilde{\cR}_n$ denote the 
set of self-intersecting meandric systems of order $n$ for which the pair-partition
under the horizontal line is constrained to be the rainbow pair-partition 
$\rho_{2n}$ of Equation (\ref{eqn:intro4}).  (For instance the second picture in 
Figure 2 shows a self-intersecting meandric system in 
$\widetilde{\cR}_4 \setminus \cR_4$.) 

(2) For every $n \in \bN$ we consider the polynomial
\begin{equation}   \label{eqn:intro8}
\widetilde{\xR}_n (t,u) := \sum_{k=1}^n  \sum_{\ell = 0}^{n^2 -n} 
r_n^{(k, \ell )} t^k u^{\ell} ,
\end{equation}
where $r_n^{(k, \ell )}$ stands for the number of self-intersecting 
meandric systems in $\widetilde{\cR}_n$ which have exactly $k$ closed 
curves and $\ell$ crossings
\footnote{In the summations from Equation (\ref{eqn:intro8}) 
one may actually restrict the range of $\ell$ to an upper 
bound of $(n^2 - n )/2$, since a system in $\widetilde{\cR}_n$ 
can only have crossings above the horizontal line.}
.  (For instance $\widetilde{\xR}_1 (t,u) = t, \ 
\widetilde{\xR}_2 (t,u) = t(1+u) + t^2$.
Clearly, for every $n \in \bN$, the semi-meander polynomial 
$\xR_n (t)$ can be retrieved as $\widetilde{\xR}_n (t,0)$.)

\vspace{6pt}

Now, the framework of creation/annihilation operators on $\cT_d$ used in 
Proposition 1.1 can be viewed as the special case $q=0$ of a $q$-deformation 
introduced by Bo$\dot{\text{z}}$ejko and Speicher \cite{BS1991} which has 
been well-studied since the 1990's.  For every $q \in (-1, 1)$ one 
has a {\em $q$-Fock space} $\cT_{d;q}$ and one can consider the 
operators $L_{1;q}, \ldots , L_{d;q}$ and $R_{1;q}, \ldots , R_{d;q}$ 
of left and respectively right creation associated to the vectors 
$e_1, \ldots , e_d$ from the standard orthonormal basis of $\bC^d$. 
(The precise definition of these operators is reviewed in Section 
3.3 below.)  The statement of Proposition 1.1 generalizes as follows.

$\ $ 

{\bf Theorem 1.4.} 
{\em Let $d \in \bN$ and $q \in (-1,1)$, and consider the operator  
\begin{equation}   \label{eqn:intro9}
T_{d;q} := \sum_{i=1}^d 
( L_{i;q} + L_{i;q}^{*} ) (R_{i;q} + R_{i;q}^{*} ) 
\in B( \cT_{d;q} ).
\end{equation}
Then $T_{d;q} = T_{d;q}^{*}$, and its moments with respect to the 
vacuum-state $\phivac$ on the $q$-Fock space are
\begin{equation}   \label{eqn:intro10}
\phivac ( T_{d;q}^n ) 
= \widetilde{\xR}_n (d,q), \ \ n \in \bN .
\end{equation}  }

$\ $

{\bf Corollary 1.5.} {\em For every $d \in \bN$ and $q \in (-1,1)$,
$( \widetilde{\xR}_n (d,q) )_{n=1}^{\infty}$ appears as sequence of moments 
for a compactly supported probability measure $\nu_{d;q}$ on $\bR$.  
The measure $\nu_{d;q}$ can be found as the 
distribution of the operator $T_{d;q}$ from Theorem 1.4 with respect 
to the vacuum-state on the $q$-Fock space $\cT_{d;q}$. }

$\ $

{\bf Remark 1.6.}
(1) The main tool for proving the moment formula (\ref{eqn:intro10}) in
Theorem 1.4 is a two-sided version of a $q$-Wick formula (previously 
studied in the one-sided case, cf. \cite{BKS1997, EP2003}) which 
involves the number of crossings of a pair-partition.  The two-sided 
$q$-Wick formula is discussed in Sections 3.2 and 4.1 below.

(2) {\em The case $q=0$.}  Clearly, Proposition 1.1 is the special case
$q=0$ of Theorem 1.4.  The recent body of work on two-faced free 
probability allows for several short proofs of this special case.  
Indeed, Proposition 1.1 is about the $(2d)$-tuple of operators 
$( L_i + L_i^{*} )_{i=1}^d \cup ( R_i + R_i^{*} )_{i=1}^d$ which is the
prototypical example of bi-free Gaussian system appearing in the 
bi-free central limit theorem from \cite{V2014}.  The explicit formulas 
that one has for the joint moments of this $(2d)$-tuple of operators 
can also be read by using the bi-free cumulant theory developped 
in \cite{CNS2015, CNS2015b}, or its precursor focused on canonical 
$(2d)$-tuples from \cite{MN2015}.  Yet another approach to Proposition 1.1 
can be found by combining a matrix model for semi-meander polynomials 
proposed in \cite{DFGG1997} with the bi-free large $N$ limits discussed in 
\cite{S2017}.  We give more details on these alternative proofs in 
Section 4.3 below.

In connection to the case $q=0$ we also mention that, among the 
various patterns that can be prescribed for the bottom part of 
a meandric system, the rainbow pair-partition $\rho_{2n}$ is 
believed to provide the case which is hardest to approach (as 
opposed, for instance, to the case when the bottom part of the 
meandric system is prescribed to be the interval pair-partition 
$\{ \, \{1,2 \}, \ldots , \{ 2n-1 , 2n \} \, \}$ -- see Section 6.3 
of \cite{DFGG1997}, particularly the comment in the last paragraph 
of that section).  In this light, it is quite nice that the semi-meander 
polynomials $Q_n$ can nevertheless be related to moments of operators 
in the two-sided framework.  This seems to be caused by the fortunate 
circumstance that in the rectangular pictures which we use (following 
\cite{CNS2015}) to depict pair-partitions of $\{ 1, \ldots , 2n \}$, 
an important ``labels-to-heights'' permutation of the $2n$ points in 
the picture converts $\rho_{2n}$ into 
$\{ \, \{1,2 \}, \ldots , \{ 2n-1 , 2n \} \, \}$ (see Notation 
\ref{def:33} below, and its follow-up in the proof of 
Lemma \ref{lemma:415}).

(3) On the lines of Theorem 1.4, we observe that the enlarged 
framework of self-intersecting meandric systems also works well 
in connection to the meander polynomials $P_n (t)$ from 
Equation (\ref{eqn:intro1}).  More precisely, 
let us look at the two-variable generalization of these 
polynomials, defined as follows.

$\ $

{\bf Notation 1.7.}
For every $n \in \bN$, consider the polynomial
\begin{equation}   \label{eqn:intro11}
\widetilde{P}_n (t,u) :=  \sum_{k=1}^n  \sum_{\ell = 0}^{n^2 -n} 
m_n^{(k, \ell )} t^k u^{\ell} ,
\end{equation}
where $m_n^{(k, \ell )}$ stands for the number of self-intersecting 
meandric systems of order $n$ which have exactly $k$ closed 
curves and $\ell$ crossings. (For instance 
$\widetilde{P}_1 (t,u) = t, \ 
\widetilde{P}_2 (t,u) = t(2 + 4u) + t^2 (2 + u^2)$.
Clearly, for every $n \in \bN$, the meander polynomial $P_n (t)$ can be 
retrieved as $\widetilde{P}_n (t,0)$.) 

\vspace{6pt}

It is natural to ask if some $q$-deformation of the operator 
$X_d$ from Equation (\ref{eqn:intro3}) could allow us to infer 
that $( \widetilde{P}_n (d,q) )_{n=1}^{\infty}$ 
appears as sequence of moments for a probability measure 
$\mu_{d;q}$ on $\bR$, for $d \in \bN$ and $q \in (-1,1)$.  There 
is in fact an obvious candidate for how to do the $q$-deformation 
of $X_d$: the semicircular elements $a_i$ appearing in 
(\ref{eqn:intro3}) can be concretely realized as 
$L_i + L_i^{*} \in B( \cT_d )$, and can then be $q$-deformed 
to $L_{i;q} + L_{i;q}^{*} \in B( \cT_{d;q} )$, $1 \leq i \leq d$.  
It is easy to see (by a straightforward adjustment of the argument 
shown in \cite[Proposition 5.9]{N2016} for the case $q=0$) that 
this candidate of $q$-deformation does indeed the required job.  
That is, we have the following proposition.

$\ $

{\bf Proposition 1.8.}
{\em Let $d \in \bN$ and $q \in (-1,1)$, and consider the 
selfadjoint operator  
\begin{equation}   \label{eqn:intro12}
X_{d;q} := \Bigl( \, \sum_{i=1}^d 
( L_{i;q} + L_{i;q}^{*} ) \otimes (L_{i;q} + L_{i;q}^{*} ) \, \Bigr)^2
\in B( \cT_{d;q} ) \otimes B( \cT_{d;q} ) .
\end{equation}
The moments of $X_{d;q}$ with respect to the state 
$\phivac \otimes \phivac$ on $B( \cT_{d;q} ) \otimes B( \cT_{d;q} )$ are
\begin{equation}   \label{eqn:intro13}
( \phivac \otimes \phivac ) ( X_{d;q}^n ) 
= \widetilde{P}_n (d,q), \ \ n \in \bN .
\end{equation}  }

$\ $

{\bf Corollary 1.9.} {\em For every $d \in \bN$ and $q \in (-1,1)$,
$( \widetilde{P}_n (d,q) )_{n=1}^{\infty}$ appears as sequence 
of moments for a compactly supported probability measure 
$\mu_{d;q}$ on $\bR$.  The measure $\mu_{d;q}$ can be found as 
the distribution of the operator $X_{d;q}$ from Proposition 1.8 
with respect to the state 
$\phivac \otimes \phivac$ on 
$B ( \cT_{d;q} ) \otimes B( \cT_{d;q} )$. }

$\ $

{\bf Organization of the paper.}
Besides the present introduction, the paper has three sections.
Section 2 covers the combinatorics relevant for proving the 
two-sided $q$-Wick formula.  In Section 3 we prove Theorem 1.4; a good
part of the section is devoted to establishing the special case 
of the two-sided $q$-Wick formula which is used in the proof of the 
theorem.  The final Section 4 discusses some miscellaneous remarks 
related to the results of the paper: in Section 4.1 we 
complete the discussion of the two-sided $q$-Wick formula, in Section 4.2
we prove Proposition 1.8, and in Section 4.3 we make some comments 
related to the special case $q=0$.

\section{Pair-partitions and strings of symbols from 
$\boldmath{ \{ 1,* \} }$ }
 
\setcounter{equation}{0}

\subsection{Pair-partitions}

\begin{definition}   \label{def:31}
Let $n$ be a positive integer.

\vspace{6pt}

(1) We denote by $\mathcal{P}_2(2n)$ the set of all 
{\em pair-partitions} of $\{ 1, \ldots , 2n \}$. 
A $\pi \in \mathcal{P}_2(2n)$ is thus of the
form $\pi=\{ V_1, \ldots, V_n \}$, where the sets 
$V_1, \ldots, V_n$ (called {\em pairs}, or {\em blocks} 
of $\pi$) satisfy:
$\cup_{i=1}^n V_i = \{ 1, \ldots , 2n \}, 
\ \ V_i \cap V_j = \emptyset \mbox{ for } i \neq j,
\mbox{ and } |V_1| =  |V_2| = \cdots = |V_n| = 2$.

\vspace{6pt}

(2) Let $\pi = \{ V_1, \ldots , V_n \}$ be in 
$\mathcal{P}_2(2n)$.  We will say that  
two distinct blocks $V_i, V_j$ of $\pi$
are {\em crossing} to mean that upon writing 
$V_i \cup V_j = \{a, b, c, d \} 
\quad \text{with}\quad a<b<c<d$,
one finds $a,c$ in one of the two blocks and $b,d$ 
in the other.  The {\em number of crossings} of 
$\pi$ is defined as
\[ 
\Cr (\pi) := \ \vline \ \{ (i,j) \mid
1 \leq i < j \leq n, \ V_i \mbox{ crosses } V_j \} \ \vline \ .
\]
A pair-partition $\pi \in \cP_2 (2n)$ is said to 
be {\em non-crossing} when it has $\Cr ( \pi ) = 0$. 
The collection of all non-crossing pair-partitions in 
$\cP_2 (2n)$ will be denoted by $\NC_2(2n)$.
\end{definition}

\begin{remark}   \label{rem:32}
In the rather extensive literature pertaining to 
pair-partitions, one finds their pictures drawn either in 
``linear'' representation (with $2n$ points labelled by
$1, \ldots , 2n$ depicted along a line) or in ``circular'' 
representation (with $2n$ points depicted around a circle). 
In either representation, $\Cr ( \pi )$ appears as the 
number of intersections between the curves drawn in 
the picture in order to represent the pairs of $\pi$.

\begin{center}
\[\begin{tikzpicture}[baseline]
\foreach \y in{0,...,9}{
\pgfmathtruncatemacro{\nodename}{\y+1}
\node (ball\nodename) [draw, circle, inner sep=0.07cm] at (\y-2, 0) {};
\node at (\y-2, 0.5){\nodename};
}
\draw[rline, thick](ball4)++(0,-0.35)-- ++(0,-0.5)--++(1,0)--++(0,0.5);
\draw[rline, thick](ball6)++(0,-0.35)-- ++(0,-0.5)--++(2,0)--++(0,0.5);
\draw[rline, thick](ball2)++(0,-0.35)-- ++(0,-1)--++(5,0)--++(0,1);
\draw[rline, thick](ball1)++(0,-0.35)-- ++(0,-1.5)--++(8,0)--++(0,1.5);
\draw[rline, thick](ball3)++(0,-0.35)-- ++(0,-2)--++(7,0)--++(0,2);

\end{tikzpicture}
\]

{\bf Figure 3.}  
{\em Linear representation of the pair-partition}
 
$\pi=\{\{1,9\},\{2,7\},\{3,10 \},\{4,5 \},\{6,8 \} \}
\in \cP_2 (10)$.
\end{center}

In this paper we will use pictures drawn in circular 
representation but where, following \cite{CNS2015}, our ``circle'' 
will in fact be a rectangle, and the $2n$ points labelled by
$1, \ldots , 2n$ will be depicted on the vertical sides of the 
rectangle: labels $1, \ldots , n$ on the {\em left} side of 
the rectangle (running downwards) and labels $n+1, \ldots , 2n$ 
on the {\em right} side of the rectangle (running upwards).

For a concrete example, in Figures 3 and 4 we consider the 
pair-partition 
\[
\pi=\{\{1,9\},\{2,7\},\{3,10 \},\{4,5 \},\{6,8 \} \}
\in \cP_2 (10),
\]
which has $\Cr ( \pi ) = 3$. Figure 3 shows the linear 
representation of this $\pi$, and Figure 4 shows its 
``rectangular'' representation.

\begin{center}
\[
\begin{tikzpicture}[baseline]
		\draw[thick, rline] (-2,0.25) -- (-2, -4.5);
		\draw[thick, dashed](-2,-4.5) -- (2,-4.5) ;
		\draw[thick,rline](2,-4.5)-- (2,0.25);
		\draw[thick,dashed](2,0.25)--(-2,0.25);

		\foreach\x in {1,..., 5}
		{
					\pgfmathtruncatemacro{\nodename}{\x}
		  \node[left]at (-2.2,0.4-\x*.8) {\x};
		  \node(ball\nodename)[draw, circle, inner sep=0.07cm] at (-2,0.4-\x*.8) {}; 
		};
  		\foreach\x in {6,..., 10}
		{
							\pgfmathtruncatemacro{\nodename}{\x}
		  \node[left]at (2.7,+\x*.8-8.8) {\x};
		  		  \node(ball\nodename)[draw, circle, inner sep=0.07cm] at (2,+\x*.8-8.8) {}; 
		};
                \draw[rline,thick](ball1)--++(2,0)--++(0,-1.2)--(ball9);
                                \draw[rline,thick](ball2)--++(1.3,0)--++(0,-2)--(ball7);
               \draw[rline,thick](ball4)--++(0.7,0)--++(0,-0.8)--(ball5);
               \draw[rline,thick](ball3)--++(2.7,0)--++(0,1.2)--(ball10);
               \draw[rline,thick](ball8)--++(-0.7,0)--++(0,-1.6)--(ball6);

\end{tikzpicture}
\]
{\bf Figure 4.}  {\em Rectangular representation of the same
$\pi \in \cP_2 (10)$ as in Figure 3.}
\end{center}

 
In our rectangular pictures it will be important (again following \cite{CNS2015}) 
to keep track of the relative heights of the $2n$ labelled points.  Nearly 
everywhere in this paper (with the exception of Section 4.1) we only need to 
look at the situation where these heights {\em alternate} between the left and 
the right side of the rectangle.  The convention for measuring heights which 
arises from \cite{CNS2015} is that we set the height level $0$ at the top 
horizontal side of the rectangle, and we measure heights {\em downwards} from 
there.  Thus in Figure 4 we get, with ``$\Height$'' for 
height 
\footnote{ Even though they are measured downwards, we will nevertheless refer
to the distances measured from the top horizontal side of the rectangle by 
calling them ``heights''. }
and by 
writing the height as a function of the corresponding label:
\begin{center}
$\Height(1)=1, \Height(2)=3, \ldots , \Height(5)=9$ and
$\Height(6)=10, \Height(7)=8, \ldots , \Height(10)=2$.
\end{center}
In general, for the ``height in terms of label'' map we will use the following 
notation.  (We mention that ``$\Ht_n$'' is a special case of a rather established 
notation for a permutation ``$\Ht_{\chi}$'' which will be reviewed in Section 4.1 
below, and plays an essential role in the combinatorics of two-faced free probability.)
\end{remark}

\begin{notation}   \label{def:33}
For every $n \in \bN$, we will denote by $\Ht_n$ the permutation of
$\{ 1, \ldots , 2n \}$ defined (in the usual two-line notation for 
permutations) as
\begin{equation}   \label{eqn:33a}
\Ht_n := \left(  \begin{array}{cccccccc}
1   & 2  & \cdots & n    & n+1 & \cdots & 2n-1  & 2n     \\
1   & 3  & \cdots & 2n-1 & 2n  & \cdots & 4     & 2 
\end{array}  \right) .
\end{equation}
Thus $\Ht_n (k)$ is the height of the point with label $k$ ($1 \leq k \leq 2n$) 
in the rectangular representation of any $\pi \in \cP_2 (2n)$.

We will also consider the natural action of $\Ht_n$ on $\cP_2 (2n)$, defined by
\[
\left( \pi = \{ V_1, \ldots , V_n \} \right) \ \Rightarrow
\ \left( \Ht_n \cdot \pi = \{ \Ht_n ( V_1), \ldots , \Ht_n (V_n) \} \right) .
\]
It is relevant to observe that the rainbow pair-partition
$\rho_{2n}$ from Equation (\ref{eqn:intro4}) of the Introduction 
is transformed by the action of $\Ht_n$ into an interval pair-partition,
\begin{equation}   \label{eqn:33b}
\Ht_n \cdot \rho_{2n} = \{ \, \{ 1,2 \}, \, \{ 3,4 \}, \ldots ,
\, \{ 2n-1, 2n \} \, \} \in \cP_2 (2n).
\end{equation}
\end{notation}  

$\ $

\subsection{Tuples in \boldmath{$\{ 1,* \}^{2n}$} and the map
\boldmath{$\Phi_n : \cP_2 (2n) \to \cD_{(1,*)} (2n)$}}

$\ $

\noindent
We now start to look at strings made with the symbols ``$1$'' and 
``$*$''.  It will be convenient to view such a string (i.e. tuple 
$\ee \in \{ 1,* \}^m$ for some $m \in \bN$) as a map
$\ee : \{1, \ldots , m \} \to \{ 1,* \}$.

\begin{definition}   \label{def:34}
A $(2n)$-tuple $\ee :\{1, \ldots, 2n \}\rightarrow \{1, *\}$ is 
said to have the {\em Dyck property} when it satisfies the 
inequalities
\begin{equation}   \label{eqn:34a}
\left|\{ 1\leq i\leq h \mid  \ee (i)=1 \}\right|
\geq |\{  1\leq i \leq h \mid  \ee (i)=* \}|,\quad \forall \, 1\leq h \leq 2n,
\end{equation}
with equality when $h=2n$, that is
\begin{equation}   \label{eqn:34b}
|\{ 1\leq i\leq 2n \mid \ee (i)=1 \}|
\ = \ n \ = \ |\{  1\leq i \leq 2n  \mid \ee (i)=* \}|.
\end{equation}
The collection of all the $(2n)$-tuples with the Dyck property 
will be denoted by $\cD_{(1,*)}(2n)$.
 \end{definition}

\begin{remark}   \label{rem:35}
The use of the term ``Dyck property'' in Definition \ref{def:34} is 
justified by a connection to lattice paths.  To every 
$\ee \in \{ 1,* \}^{2n}$ one can associate a path with $2n$ steps in 
$\bZ^2$ which starts at $(0,0)$ and proceeds according to the following
rule:

\noindent
-- for every $1 \leq h \leq 2n$ such that $\ee (h) = 1$ we perform a 
step of $(1,1)$ (North-East step);

\noindent
-- for every $1 \leq h \leq 2n$ such that $\ee (h) = *$ we perform a 
step of $(1,-1)$ (South-East step).

\noindent
That is, our path visits succesively the lattice points 
$(0,0), (1,p_1), (2,p_2), \ldots , (2n, p_{2n})$, where 
for $1 \leq h \leq 2n$ we put
$p_h = \ \vline \ \{ 1 \leq i \leq h \mid \ee (i) = 1 \} \ \vline \
- \ \vline \ \{ 1 \leq i \leq h \mid \ee (i) = * \} \ \vline \ .$
Clearly, the conditions (\ref{eqn:34a}), (\ref{eqn:34b}) in Definition 
\ref{def:34} have the meaning that our lattice path never goes under 
the horizontal axis of $\bZ^2$, and ends at the point $(2n,0)$ on that axis.  
These conditions constitute precisely the definition of a {\em Dyck path} 
in $\bZ^2$.  
\end{remark}

$\ $

\begin{prop-and-notation}   \label{def:36}
Let $n$ be a positive integer.

\vspace{6pt}

(1) Let $\pi$ be in $\cP_2 (2n)$.  We consider the pair-partition 
$\Ht_n \cdot \pi$ (as in Notation \ref{def:33}), and we write it 
explicitly, $\Ht_n \cdot \pi = \{ W_1, \ldots , W_n \}$.
Let $\ee : \{ 1, \ldots , 2n \} \to \{ 1,* \}$ be defined by 
\begin{equation}   \label{eqn:36a}
\left\{   \begin{array}{l}
\ee ( \min (W_1) ) = \cdots = \ee (\min (W_n) ) = 1, \mbox{ and }  \\
\ee ( \max (W_1) ) = \cdots = \ee (\max (W_n) ) = *. 
\end{array}  \right.
\end{equation} 
Then $\ee \in \cD_{(1,*)} (2n)$.

\vspace{6pt}

(2) We will denote by $\Phi_n : \cP_2 (2n) \to \cD_{(1,*)} (2n)$
the map which associates to every $\pi \in \cP_2 (2n)$ the tuple
$\ee \in \cD_{(1,*)} (2n)$ described in part (1) above.
\end{prop-and-notation}

\vspace{6pt}

{\em Proof}  
that $\ee$ of (\ref{eqn:36a}) has the Dyck property.
Fix an $h \in \{ 1, \ldots , 2n \}$ and observe that
\[
\vline \, \{ 1 \leq i \leq h \mid \ee (i) = 1 \} \, \vline 
\ = \ \vline \ \{ 1 \leq i \leq h \mid 
                  i \in \{ \min (W_1), \ldots , \min (W_n) \} \ \vline 
\]
\[
= \ \vline \ \{ \min (W_1), \ldots , \min (W_n) \} 
                  \cap \{ 1, \ldots , h \} \ \vline                    
\ = \ \vline \ \{ 1 \leq m \leq n \mid \min (W_m) \leq h \} \ \vline \ .
\]
Similarly, we see that
$\vline \ \{ 1 \leq i \leq h \mid \ee (i) = * \} \ \vline \
= \ \vline \, \{ 1 \leq m \leq n \mid \max (W_m) \leq h \} \ \vline$ .
Thus (\ref{eqn:34a}) amounts to
$\vline \ \{ 1 \leq m \leq n \mid \min (W_m) \leq h \} \ \vline \ 
\geq \ \vline \, \{ 1 \leq m \leq n \mid \max (W_m) \leq h \} \ \vline \ ,$
and holds true due to the obvious implication 
``$( \max (W_m) \leq h ) \Rightarrow ( \min (W_m) \leq h )$''.  The equality
(\ref{eqn:34b}) is also clear, from how $\ee$ is defined in (\ref{eqn:36a}).
\hfill  $\blacksquare$

\begin{example}  \label{example:37}
Consider again the pair-partition $\pi \in \cP_2 (10)$ depicted in Figure 4, 
and let us determine what is $\Phi_5 ( \ee ) \in \cD_{(1,*)} (10)$.  We thus 
look at the {\em heights} of the $10$ points marked around the rectangle,
and: whenever two points at heights $i$ and $j$ are paired, with $i < j$, 
we assign $\ee (i) = 1$ and $\ee (j) = *$.  Formally, we write:
\[
\begin{array}{lll}
\pi & =  & \bigl\{ \, \{ 1,9 \}, \{ 2,7 \}, \{ 3,10 \}, 
                                 \{ 4,5 \}, \{ 6, 8 \}  \, \bigr\}     \\
    & \Rightarrow  & \Ht_{10} ( \pi ) = \bigl\{ \, \{ 1,4 \}, 
           \{ 2,5 \}, \{ 3,8 \}, \{ 6,10 \}, \{ 7, 9 \} \, \bigr\}     \\
    & \Rightarrow  & \ee := \Phi_5 ( \pi ) \mbox{ has }
                     \ee^{-1} (1) = \{ 1,2,3,6,7 \} \mbox{ and }
                     \ee^{-1} (*) = \{ 4,5,8,9,10 \}                    \\
    & \Rightarrow  & \Phi_5 ( \pi ) \mbox{ is the tuple }
                     \ee = (1,1,1,*,*,1,1,*,*,*).
\end{array}
\]
\end{example}

\begin{remark}   \label{rem:37}
For the subsequent discussion and figures, it will come in handy to 
give names to the actual points (geometric entities) marked on the boundary 
of the rectangle: we will denote them as $P_1, \ldots , P_{2n}$, in such a 
way that
\[
\mbox{ (label of $P_{k}$) } = k \mbox{ and }
\mbox{ (height of $P_{k}$) } = \Ht_n ( k ), \ \ 
\mbox{ for $1 \leq k \leq 2n$.}
\]

\begin{center}
\[
\begin{tikzpicture}[baseline]
		\draw[thick, rline] (-2,0.25) -- (-2, -4.5);
		\draw[thick, dashed](-2,-4.5) -- (2,-4.5) ;
		\draw[thick,rline](2,-4.5)-- (2,0.25);
		\draw[thick,dashed](2,0.25)--(-2,0.25);

		\foreach\x in {1,..., 5}
		{
					\pgfmathtruncatemacro{\nodename}{\x}
		  \node[left]at (-2.2,0.4-\x*.8) {$P{_\x}$};
		  \node(ball\nodename)[draw, circle, inner sep=0.07cm] at (-2,0.4-\x*.8) {}; 
		};
		\node [left] at (-2.2-1, -0.4){deco.$=1$};
		\node [left] at (-2.2-1, -0.4-0.8){deco.$=1$};
		\node [left] at (-2.2-1, -0.4-1.6){deco.$=*$};
		\node [left] at (-2.2-1, -0.4-2.4){deco.$=1$};
		\node [left] at (-2.2-1, -0.4-3.2){deco.$=*$};
		
  		\foreach\x in {6,..., 10}
		{
							\pgfmathtruncatemacro{\nodename}{\x}
		  \node[left]at (2.7+0.2,+\x*.8-8.8) {$P_\x$};
		  		  \node(ball\nodename)[draw, circle, inner sep=0.07cm] at (2,+\x*.8-8.8) {}; 
		};
		 \node[left]at (2.7+0.2+2,+6*.8-8.8){deco.$=*$};
		  \node[left]at (2.7+0.2+2,+7*.8-8.8){deco.$=*$};
		  \node[left]at (2.7+0.2+2,+8*.8-8.8){deco.$=1$};
		  \node[left]at (2.7+0.2+2,+9*.8-8.8){deco.$=*$};
  		  \node[left]at (2.7+0.2+2,+10*.8-8.8){deco.$=1$};

\end{tikzpicture}
\]
{\bf Figure 5.}  {\em $10$ decorated points $P_1, \ldots , P_{10}$ in a 
rectangular picture.}
\end{center}

In terms of these points $P_1, \ldots , P_{2n}$, the assignments of the 
form ``$\ee (i) = 1$'' or ``$\ee (j) = *$'' introduced in Notation 
\ref{def:36} will be referred to by saying that 
{\em the point at height $i$ is ``$1$''-decorated} and respectively
that {\em the point at height $j$ is ``$*$''-decorated}.  
For illustration, Figure 5 shows the points $P_1, \ldots , P_{10}$
and their decorations, as they come out of Example \ref{example:37}
(e.g. $P_2$ has label $2$, height $3$, and is ``$1$''-decorated, while 
$P_6$ has label $6$, height $10$, and is ``$*$''-decorated).
\end{remark}
 
$\ $

\subsection{Choice numbers and the enumeration of 
\boldmath{$\Phi_n^{-1} ( \ee )$}}

\begin{remark-and-notation}    \label{rem:38}
Let $n$ be a positive integer and let us fix a tuple 
$\ee \in \cD_{(1,*)} (2n)$.  For the proof of the two-sided $q$-Wick 
formula in the next section, it will be important to
have a good description of the pre-image 
$\Phi_n^{-1} ( \ee ) \subseteq \cP_2 ( 2n)$, where $\Phi_n$ is 
the map introduced in Notation \ref{def:36}.  To this end, 
we will make the following (ad-hoc) definition: 
for 
\footnote{ Note that $1 \not\in \ee^{-1} (*)$, due to the 
assumption that $\ee$ has the Dyck property.}
every 
$h \in \ee^{-1} (*) \subseteq \{ 2, \ldots , 2n \}$, we call
{\em ``choice number of $\ee$ at $h$''} the number 
\begin{equation}  \label{eqn:38a}
\Choice_{\ee} (h)
:=  \ \vline \, \{ 1 \leq i \leq h-1 \mid \ee (i) = 1 \} \ \vline \
-  \ \vline \ \{ 1 \leq i \leq h-1 \mid \ee (i) = * \} \ \vline \ . 
\end{equation}
By invoking the Dyck property satisfied by $\ee$, we infer that 
$\Choice_{\ee} (h) \geq 1$.  Indeed:
\begin{align*}
0 
& \leq \ \vline \, \{ 1 \leq i \leq h \mid \ee (i) = 1 \} 
  \ \vline \ - \ \vline \ \{ 1 \leq i \leq h \mid \ee (i) = * \} \ \vline   \\
& = \ \vline \, \{ 1 \leq i \leq h-1 \mid \ee (i) = 1 \} \ \vline \
- \Bigl( \ \vline \ \{ 1 \leq i \leq h-1 \mid \ee (i) = * \} \ \vline  
         \ + 1 \ \Bigr)                                                   \\
& = \Choice_{\ee} (h) -1.
\end{align*}

The rationale for the term ``choice'' used in (\ref{eqn:38a}) 
is that the partitions in $\Phi_n^{-1} ( \ee )$ are naturally
parametrized by ``tuples of choices'' of the form
\begin{equation}  \label{eqn:38c}
( \xlambda_h )_{h \in \ee^{-1} (*)}, \mbox{ with }
\xlambda_h \in [ 1, \Choice_{\ee} (h) ] \cap \bN , 
\ \ \forall \, h \in \ee^{-1} (*).
\end{equation}
Note that, as an obvious consequence of (\ref{eqn:38c}), one has
\begin{equation}  \label{eqn:38b}
| \, \Phi_n^{-1} ( \ee ) \, |  =
\prod_{h \in \ee^{-1} (*)} \Choice_{\ee} (h).
\end{equation}

The procedure of retrieving a pair-partition $\pi \in \cP_2 (2n)$ 
from the information provided by $\ee$ and a tuple of $\xlambda_h$'s 
as in (\ref{eqn:38c}) is rather standard in the literature on 
lattice paths, with the slight difference in terminology that instead 
of $\ee$ one usually refers to the corresponding Dyck path 
mentioned in Remark \ref{rem:35}.  More precisely: a tuple of
$\xlambda_h$'s as indicated in (\ref{eqn:38c}) can be viewed as an
additional piece of structure imposed on the Dyck path corresponding to 
$\ee$, and this yields the notion of ``weighted Dyck path'', which 
is thoroughly studied e.g. in Section 5.2 of the monograph \cite{GJ1983}.

In order to explain how a tuple as in (\ref{eqn:38c}) parametrizes a
pair-partition from $\Phi_n^{-1} ( \ee )$, we find it more illuminating 
to discuss a relevant concrete example, where the presentation can 
be illustrated with pictures.
\end{remark-and-notation}

$\ $

\begin{example}   \label{example:39}
Suppose that $n = 5$ and that 
$\ee = ( 1,1,1,*,*,1,1,*,*,* ) \in \cD_{(1,*)} (10)$.
Thus $\ee^{-1} (*)$ is $\{ 4,5,8,9,10 \}$, and the corresponding 
choice-numbers are:
\[
\Choice_{\ee} (4) = 3-0 = 3, \ \Choice_{\ee} (5) = 3-1 = 2,
\, \Choice_{\ee} (8) = 5-2 = 3, 
\]
\[
\Choice_{\ee} (9) = 5-3 = 2, \ \Choice_{\ee} (10) = 5-4 = 1.
\]
For this $\ee$, we consider the set of pair-partitions 
$\Phi_5^{-1} ( \ee ) \subseteq \cP_2 (10)$, and we will discuss its
enumeration.

Let us draw points $P_1, \ldots , P_{10}$ around a rectangle, in the same 
way as in Figure 5, and for every $h \in \{ 1, \ldots , 10 \}$ let us 
decorate the point at height $h$ by the symbol $\ee (h)$ (this
also is exactly as in Figure 5).  The enumeration of 
$\Phi_5^{-1} ( \ee )$ then amounts to the enumeration of all the 
pair-partitions of $P_1, \ldots , P_{10}$ which have the 
following
property: 
every chord of the pair-partition must connect a point decorated 
as ``$1$'' to a point decorated as ``$*$'', where the 
``$*$''-decorated endpoint has a bigger height than the ``$1$''-decorated 
endpoint of the chord.

In order to draw a generic pair-partition of $P_1, \ldots , P_{10}$ which 
has the required property, we proceed in five steps, as follows: we visit 
the five ``$*$''-decorated points among $P_1, \ldots , P_{10}$, in increasing 
order of their heights, and for every such point $P_m$ we choose its pair 
out of the ``$1$''-decorated points with heights smaller than the one of $P_m$,
and which have not already been used in a preceding step.

To be specific: the first step of our construction is to visit the 
point at height $4$, and to choose a pair for it; we have three 
possibilities for doing so (which corresponds to the fact that
$\Choice_{\ee} (4) = 3$), namely, we can choose any of the points at heights
$1$, $2$ or $3$.  Let's say that we choose the pair to be at height $3$, 
as shown in Figure 6.  The second step of our construction is then to
go to the point at height $5$, and choose a pair for that one; we have 
two possibilities for doing so, corresponding to the fact that 
$\Choice_{\ee} (5) = 3-1 =2$ (indeed, there are three points with 
decoration ``$1$'' and with heights $< 5$, but one of them is
already engaged in a different pair of the construction).  So we must
pair the point at height $5$ with one of the points at heights $1$ or $2$ 
-- let's say we choose the one at height $1$.

\begin{center}
\[
\begin{tikzpicture}[baseline]
		\draw[thick, rline] (-2,0.25) -- (-2, -4.5);
		\draw[thick, dashed](-2,-4.5) -- (2,-4.5) ;
		\draw[thick,rline](2,-4.5)-- (2,0.25);
		\draw[thick,dashed](2,0.25)--(-2,0.25);

		\foreach\x in {1,..., 5}
		{
					\pgfmathtruncatemacro{\nodename}{\x}
		  \node[left]at (-2.2,0.4-\x*.8) {$P{_\x}$};
		  \node(ball\nodename)[draw, circle, inner sep=0.07cm] at (-2,0.4-\x*.8) {}; 
		};
		\node [left] at (-2.2-1, -0.4){deco.$=1$};
		\node [left] at (-2.2-1, -0.4-0.8){deco.$=1$};
		\node [left] at (-2.2-1, -0.4-1.6){deco.$=*$};
		\node [left] at (-2.2-1, -0.4-2.4){deco.$=1$};
		\node [left] at (-2.2-1, -0.4-3.2){deco.$=*$};
		
  		\foreach\x in {6,..., 10}
		{
							\pgfmathtruncatemacro{\nodename}{\x}
		  \node[left]at (2.7+0.2,+\x*.8-8.8) {$P_\x$};
		  		  \node(ball\nodename)[draw, circle, inner sep=0.07cm] at (2,+\x*.8-8.8) {}; 
		};
		\node[left]at (2.7+0.2+2,+6*.8-8.8){deco.$=*$};
		  \node[left]at (2.7+0.2+2,+7*.8-8.8){deco.$=*$};
		  \node[left]at (2.7+0.2+2,+8*.8-8.8){deco.$=1$};
		  \node[left]at (2.7+0.2+2,+9*.8-8.8){deco.$=*$};
		  \node[left]at (2.7+0.2+2,+10*.8-8.8){deco.$=1$};

              \draw[rline,thick](ball1)--++(0.7,0)--++(0,-1.6)--(ball3);
              \draw[rline,thick](ball2)--++(2,0)--++(0,-0.4)--(ball9);

\end{tikzpicture}
\]

{\bf Figure 6.}  {\em First two steps in the construction of a 
pair-partition 

in $\Phi_5^{-1} ( \ee )$, for the $\ee$ of Figure 5.}
\end{center}


\vspace{6pt}

\noindent
Continuing in the same way, we will then visit the points at heights 
$8$, $9$ and $10$, and choose pairs for them (in three possible ways for the 
point $P_7$ at height $8$, then in two possible ways for the point $P_5$ at 
height $9$, and in a unique way for $P_6$ at height $10$).  We hope this discussion 
convinces the reader that, in our running example, the set $\Phi_5^{-1} ( \ee )$ 
has $3 \cdot 2 \cdot 3 \cdot 2 \cdot 1 = 36$ pair-partitions, as claimed by the
formula (\ref{eqn:38b}).

Let us also describe precisely what are the parameters 
$( \xlambda_h )_{h \in \ee^{-1} (*)}$ mentioned in (\ref{eqn:38c}) which govern
our succesive choices of pairs for the ``$*$''-decorated points in the picture.
We will use the convention that: whenever we want to assign a pair to a 
``$*$''-decorated point on the left side of the rectangle, the possible 
choices of pairs are considered in clockwise order; while for ``$*$''-decorated 
points which lie on the right side of the rectangle, the possible choices of pairs 
are considered in counterclockwise order (so we always follow the boundary of the 
rectangle in the direction going towards the top horizontal side).  

For instance, the first step of the construction presented earlier in this example
assigned a pair to the point at height $4$; the choice of the pair 
was governed by a parameter $\xlambda_4 \in \{ 1,2,3 \}$, and our convention 
says that (for what was picked in Figure 6) we had 
$\xlambda_4=3$. 
Then the second step of the construction was to choose a pair
for the point at height $5$, governed by a parameter $\xlambda_5 \in \{ 1,2 \}$,
and our convention says that (for what was picked in Figure 6) we had 
$\xlambda_5 = 1$.  Figure 7 shows how the pair-partition
$\pi \in \Phi_5^{-1} ( \ee )$ looks like when the choices 
$\xlambda_4= 3$, $\xlambda_5 = 1$ from Figure 6 are followed by 
$\xlambda_8 = \xlambda_9 = 2$ and $\xlambda_{10} = 1$.  
(Or referring to Figure 4, which has the same $\ee$ as in this example: the 
sequence of choices for the pair-partition in that figure is 
$\xlambda_4 = \xlambda_5 =  \xlambda_8 = 2$, $\xlambda_9 = \xlambda_{10} = 1$.) 


\begin{center}
\[
\begin{tikzpicture}[baseline]
		\draw[thick, rline] (-2,0.25) -- (-2, -4.5);
		\draw[thick, dashed](-2,-4.5) -- (2,-4.5) ;
		\draw[thick,rline](2,-4.5)-- (2,0.25);
		\draw[thick,dashed](2,0.25)--(-2,0.25);

		\foreach\x in {1,..., 5}
		{
					\pgfmathtruncatemacro{\nodename}{\x}
		  \node[left]at (-2.2,0.4-\x*.8) {$P{_\x}$};
		  \node(ball\nodename)[draw, circle, inner sep=0.07cm] at (-2,0.4-\x*.8) {}; 
		};
		\node [left] at (-2.2-1, -0.4){deco.$=1$};
		\node [left] at (-2.2-1, -0.4-0.8){deco.$=1$};
		\node [left] at (-2.2-1, -0.4-1.6){deco.$=*$};
		\node [left] at (-2.2-1, -0.4-2.4){deco.$=1$};
		\node [left] at (-2.2-1, -0.4-3.2){deco.$=*$};

  		\foreach\x in {6,..., 10}
		{
							\pgfmathtruncatemacro{\nodename}{\x}
		  \node[left]at (2.7+0.2,+\x*.8-8.8) {$P_\x$};
		  		  \node(ball\nodename)[draw, circle, inner sep=0.07cm] at (2,+\x*.8-8.8) {}; 
		};
		\node[left]at (2.7+0.2+2,+6*.8-8.8){deco.$=*$};
		  \node[left]at (2.7+0.2+2,+7*.8-8.8){deco.$=*$};
		  \node[left]at (2.7+0.2+2,+8*.8-8.8){deco.$=1$};
		  \node[left]at (2.7+0.2+2,+9*.8-8.8){deco.$=*$};
		   \node[left]at (2.7+0.2+2,+10*.8-8.8){deco.$=1$};

              \draw[rline,thick](ball1)--++(0.7,0)--++(0,-1.6)--(ball3);
              \draw[rline,thick](ball2)--++(2,0)--++(0,-0.4)--(ball9);
              \draw[rline,thick](ball4)--++(0.7,0)--++(0,-1.2)--(ball6);
              \draw[rline,thick](ball7)--++(-0.7,0)--++(0,2.4)--(ball10);
              \draw[rline,thick](ball8)--++(-2,0)--++(0,-1.2)--(ball5);

\end{tikzpicture}
\]
{\bf Figure 7.}  {\em The pair-partition in $\Phi_5^{-1} ( \ee )$ which arises when}

{\em we choose $\xlambda_4 = 3$, $\xlambda_5 = 1$,
$\xlambda_8 = \xlambda_9 = 2$, $\xlambda_{10} = 1$.  }
\end{center}


\vspace{6pt}

The convention for how the parameters $( \xlambda_h )_{h \in \ee^{-1} (*)}$ are 
used in the construction of a $\pi \in \Phi_n^{-1} ( \ee )$ is useful because 
it generates a nice formula for number of crossings, as described in the next 
proposition.
\end{example}

\begin{proposition}  \label{prop:310}
Let $n$ be a positive integer and let $\ee$ be a tuple in $\cD_{(1,*)} (2n)$. 
Consider a tuple $( \xlambda_h )_{h \in \ee^{-1} (*)}$ as discussed in 
Remark \ref{rem:38}, and let $\pi \in \Phi_n^{-1} ( \ee )$ be the pair-partition 
which corresponds to this $( \xlambda_h )_{h \in \ee^{-1} (*)}$, in the way 
described in Example \ref{example:39}.  Then one has
\begin{equation}   \label{eqn:310a}
\emph{\Cr} ( \pi ) = \sum_{h \in \ee^{-1} (*)} ( \xlambda_h - 1 ).
\end{equation}
\end{proposition}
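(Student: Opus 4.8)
The plan is to run through the step-by-step construction of Example~\ref{example:39} and, at each of its steps, count the crossings of $\pi$ that involve the chord created at that step together with a chord created \emph{later}; I will argue that this number is exactly $\xlambda_h - 1$ at the step processing $h \in \ee^{-1}(*)$, and then sum. To set up notation, write $\ee^{-1}(*) = \{ h_1 < \cdots < h_n \}$; the construction of Example~\ref{example:39} visits these heights in increasing order and produces chords $c_1, \ldots, c_n$, where $c_m$ joins the $*$-decorated point $Q$ of height $h := h_m$ to the point $R = R_{\xlambda_h}$, namely the $\xlambda_h$-th available $1$-decorated point met while moving along the boundary of the rectangle away from $Q$ toward the top side (clockwise when $Q$ is on the left side, counterclockwise when on the right). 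Let $\alpha$ denote the open boundary arc swept out on the way from $Q$ to $R$. By Remark~\ref{rem:32}, $\Cr(\pi)$ is the number of pairs of chords of $\pi$ whose endpoints interleave around the boundary of the rectangular picture; in particular, for $m \ne m'$, $c_{m'}$ crosses $c_m$ exactly when one endpoint of $c_{m'}$ lies on $\alpha$ and the other on its complementary arc. Because the two chords meeting in any crossing are born at two different steps, each crossing is recorded exactly once --- at the later of those two steps --- so $\Cr(\pi) = \sum_{m=1}^{n} N_m$, where $N_m$ is the number of indices $m' > m$ with $c_{m'}$ crossing $c_m$. It then suffices to prove $N_m = \xlambda_{h_m} - 1$ for each $m$.

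The crucial geometric fact, to be checked by a short case distinction on which side of the rectangle $Q$ lies (equivalently on the parity of $h$), is that \emph{$\alpha$ contains only points of height strictly less than $h$}: going from $Q$ toward the top one first passes the points lying above $Q$ on $Q$'s own side, all of height $< h$, and then descends the opposite side through points of increasing height, and since $R$ itself has height $< h$, no point of height $\ge h$ is met before $R$ is reached. Combining this with the description of $\Phi_n^{-1}(\ee)$ recalled in Remark~\ref{rem:38} --- after step $m-1$, the points of height $< h$ are exactly the endpoints of the already-built chords $c_1, \ldots, c_{m-1}$ together with the still-available $1$-decorated points $R_1, \ldots, R_{\Choice_\ee(h)}$ --- we see that the points lying on $\alpha$ are precisely $R_1, \ldots, R_{\xlambda_h - 1}$ together with some of the endpoints of $c_1, \ldots, c_{m-1}$.

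Granting this, fix $m' > m$. The $*$-endpoint of $c_{m'}$ has height $h_{m'} > h$, so it lies off $\alpha$; hence $c_{m'}$ crosses $c_m$ if and only if the $1$-endpoint of $c_{m'}$ lies on $\alpha$. That $1$-endpoint is consumed only at step $m'$, so it is not an endpoint of any $c_j$ with $j < m$, and therefore, if it lies on $\alpha$, it must be one of $R_1, \ldots, R_{\xlambda_h - 1}$. Conversely, each $R_i$ with $i \le \xlambda_h - 1$ is available at step $m$ and is not the partner of $Q$, so its partner is a $*$-decorated point of height $> h$ (a height $< h$ would have forced $R_i$ to be used before step $m$); thus $R_i$ is the $1$-endpoint of some $c_{m_i}$ with $m_i > m$, and $c_{m_i}$ crosses $c_m$. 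As $i \mapsto c_{m_i}$ is injective, this sets up a bijection between $\{ R_1, \ldots, R_{\xlambda_h - 1} \}$ and the set of $m' > m$ for which $c_{m'}$ crosses $c_m$; hence $N_m = \xlambda_h - 1$, and summing over $m$ yields \eqref{eqn:310a}.

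I expect the main difficulty to be the geometric bookkeeping in the crucial fact above: making fully rigorous, under the clockwise/counterclockwise conventions of Example~\ref{example:39}, both that $\alpha$ never reaches height $h$ and that the available $1$-decorated points on $\alpha$ are exactly $R_1, \ldots, R_{\xlambda_h - 1}$. Once those are pinned down, the remainder is a routine count; the only other thing to note is the (immediate) fact that the two chords of a crossing always arise at distinct steps, which is what validates the ``record at the later step'' decomposition.
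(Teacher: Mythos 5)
Your proof is correct, and at bottom it rests on the same observation as the paper's: every crossing of $\pi$ corresponds to exactly one ``skip'' made during the step-by-step construction of $\pi$ from the choice tuple. The only real difference is how the resulting double count is organized. The paper attributes each crossing to the \emph{later}-drawn of the two chords involved, first proving that the new chord with ``$1$''-decorated endpoint at height $k$ meets exactly $\Skipped (k)$ of the previously drawn chords, and then regrouping the skips by their ``$*$''-endpoints to reach $\sum_{h} ( \xlambda_h - 1)$. You attribute each crossing to the \emph{earlier}-drawn chord, i.e.\ to the step itself, which yields the summand $\xlambda_h - 1$ in one stroke and dispenses with the regrouping; you also make explicit the geometric fact (the arc $\alpha$ from the ``$*$''-point at height $h$ toward the top contains only points of height $< h$, and the available ``$1$''-decorated points on it are precisely the $\xlambda_h - 1$ skipped ones, so that for $m' > m$ the chord $c_{m'}$ crosses $c_m$ exactly when its ``$1$''-endpoint is one of these) that the paper compresses into ``as seen by examining the various possible cases''. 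One wording slip: having defined $N_m$ as the number of $m' > m$ with $c_{m'}$ crossing $c_m$, each crossing is recorded at the \emph{earlier} of the two steps, not ``the later'' as you wrote; this does not affect the argument.
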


\begin{proof}  Pick a $k \in \ee^{-1} (1)$, and consider the point at height 
$k$ in the picture of $\pi$.  This point is paired with a point at height
$h \in \ee^{-1} (*)$, where $h > k$ (and where we recall that, in our pictures,
heights are measured downwards from the top horizontal side of the rectangle).
Say that the pairing of the points at heights $h$ and $k$ was done in the 
$m$-th step of the construction of $\pi$, where $1 \leq m \leq n$.  This means 
that there were $m-1$ other ``$*$''-decorated points, with heights 
$h_1 < \cdots < h_{m-1} < h$, which were visited and paired before we arrived 
to visit and choose a pair for the point at height $h$.  For some of the $i$'s 
in $\{ 1 , \ldots , m-1 \} $ it may have been the case that the point at height 
$k$ was considered but then 
``skipped'' 
\footnote{ This means that: (i) $h_i > k$; (ii) the parameter $\xlambda_{h_i}$ was 
large enough so that, upon considering the possible choices of pairs for the point 
at height $h_i$, we passed the point at height $k$ before arriving at the choice 
dictated by $\xlambda_{h_i}$.}
in 
the process of choosing the pair for the point at height $h_i$.  We denote by
``$\Skipped (k)$'' the number of values of $1 \leq i \leq m-1$ for which this 
happened.

By starting from how the family of numbers $( \Skipped (k) )_{k \in \ee^{-1} (1)}$
was defined in the preceding paragraph, an elementary counting of crossings
gives the formula
\begin{equation}   \label{eqn:310b}
\Cr ( \pi ) = \sum_{k \in \ee^{-1} (1)} \Skipped (k).
\end{equation}
This is because (as seen by examining the various possible cases) every new 
chord which is drawn in our construction of $\pi$ intersects precisely 
$\Skipped (k)$ of the precedingly drawn chords, where $k$ is the height of 
the ``$1$''-decorated endpoint of the new chord.  When we sum over 
$k \in \ee^{-1} (1)$, we will thus consider all the intersections between
chords in the drawing of $\pi$, with every such intersection counted exactly 
once.

But on the other hand, the sum on the right-hand side of (\ref{eqn:310b}) just
gives the total number of ``skips'' that were made during the construction of 
$\pi$.  The convention (explained at the end of Example \ref{example:39}) for 
how we choose a ``$1$''-decorated point at every step of our construction makes
clear that this total number of skips is equal to 
\begin{equation}   \label{eqn:310c}
\sum_{h \in \ee^{-1} (*)} ( \xlambda_h - 1 )
\end{equation}
(where we now organize our counting according to the ``$*$''-decorated endpoints
of the chords of $\pi$).  By replacing the quantity (\ref{eqn:310c}) on the 
right-hand side of Equation (\ref{eqn:310b}), we obtain the formula for $\Cr ( \pi )$
stated in the proposition.
\end{proof}

$\ $

\section{A two-sided \boldmath{$q$}-Wick formula, 
and proof of Theorem 1.4}

\setcounter{equation}{0}

\subsection{Review of \boldmath{$q$}-creation and 
\boldmath{$q$}-annihilation operators}

$\ $

\noindent
Throughout this subsection we fix a positive integer $d$ and a real 
number $q \in (-1,1)$.  We start from the finite dimensional Hilbert 
space $\bC^d$ and we consider the $q$-deformed Fock space over it,
as defined by Bo$\dot{\text{z}}$ejko and Speicher \cite{BS1991}.  
We will denote this $q$-deformed Fock space by $\cT_{d;q}$.  It is 
described as follows.  

\vspace{6pt}

$\bullet$ First, for every $n \in \bN$ one considers the inner product 
$\langle\cdot  , \cdot\rangle_q$ on $( \bC^d )^{\otimes n}$ which is 
determined by the requirement that
\begin{equation}   \label{eqn:41a}
\langle v_1\otimes \cdots \otimes v_n \, , 
\, w_1\otimes \cdots\otimes w_n\rangle_q \\
= \sum_{\tau \in \cS_n} \langle v_1, w_{\tau (1)} \rangle \cdots 
\langle v_n, w_{\tau (n)}\rangle q^{\iota(\tau)},
\end{equation}   
where $\cS_n$ is the group of permutations of $\{ 1, \ldots , n \}$ and 
where for $\tau \in \cS_n$ we put 
$\iota(\tau )= \ \vline 
\ \{ (i,j) \mid  1\leq i < j \leq n, \tau (i)>\tau (j)\} \ \vline \ .$
The fact that Equation (\ref{eqn:41a}) defines indeed an inner product is
proved in \cite{BS1991}.  

\vspace{6pt}

$\bullet$ $\cT_{d;q}$ is then defined to be the Hilbert space
\begin{equation}   \label{eqn:41b}
\cT_{d;q} := \bC 
\oplus \bigoplus_{n=1}^{\infty} (\bC^d)^{\otimes n}
\ \mbox{ (orthogonal direct sum),}
\end{equation}
where every summand $(\bC^d)^{\otimes n}$ is considered with the inner 
product from (\ref{eqn:41a}).  The number $1 \in \bC$ in the first summand
on the right-hand side of (\ref{eqn:41b}) is called the {\em vacuum vector}
of $\cT_{d;q}$, and will be denoted by $\xivac$.

For every $v \in \bC^d$ one can verify that there exist operators 
$L (v), R(v) \in B ( \cT_{d;q} )$, called 
{\em left $q$-creation operator} and respectively 
{\em right $q$-creation operator} associated to $v$, which are determined 
by the requirements that 
$[ L (v) ] ( \xivac ) = v = [ R (v) ] ( \xivac )$ and that 
\[
\left\{    \begin{array}{l}
{ [ L (v) ] (v_1 \otimes \cdots \otimes v_n)
  = v \otimes v_1 \otimes \cdots \otimes v_n, }        \\
{ [ R (v) ] (v_1 \otimes \cdots \otimes v_n)
  = v_1 \otimes \cdots\otimes v_n \otimes v,  }
\end{array} ,
\ \ \forall \, n \in \bN \mbox{ and } v_1, \ldots , v_n \in \bC^d .
\right.
\]
The adjoints of $L (v)$ and $R(v)$ are called 
{\em left $q$-annihilation operator} and respectively 
{\em right $q$-annihilation operator} associated to $v$.
They will be denoted as $L^{*} (v)$ and $R^{*} (v)$, and they act as follows:
$[ L^{*} (v) ] ( \xivac ) = 0 = [ R^{*} (v) ] ( \xivac )$, and 
\[
\left\{    \begin{array}{l}
{ [ L^{*} (v) ] (v_1 \otimes \cdots \otimes v_n)
  = \sum_{k=1}^n q^{k-1} \langle v_k , v \rangle v_1 \otimes 
  \cdots \otimes v_{k-1} \otimes v_{k+1} \otimes \cdots \otimes v_n,  }    \\
                                                                           \\
{ [ R^{*} (v) ] (v_1 \otimes \cdots \otimes v_n)
  = \sum_{k=1}^n q^{k-1} \langle v_{n-k+1} , v \rangle v_1 \otimes 
  \cdots \otimes v_{n-k} \otimes v_{n-k+2} \otimes \cdots \otimes v_n , }
\end{array}  \right.
\]
for all $n \in \bN$ and $v_1, \ldots , v_n \in \bC^d$. 

The left/right operators of $q$-creation and $q$-annihilation on 
$\cT_{d;q}$ are known to satisfy a number of commutation relations.
Among these, it is of interest for the present paper to record the 
one stated in the following lemma (where we use the standard notation 
$\bigl[ \, X,Y \, \bigr] := XY - YX$ for $X,Y \in B( \cT_{d;q} )$).

\begin{lemma}    \label{lemma:41} 
For every $v,w \in \bC^{d}$ one has
\begin{equation}   \label{eqn:41xa}
\bigl[ \, L (v) + L^{*} (v) , R (w) + R^{*} (w) \, \bigr] =
\bigl( \, \langle w,v \rangle  - \langle v,w \rangle \, \bigr) \, Q, 
\end{equation}
where $Q \in B( \cT_{d;q} )$ is the operator determined by the 
requirement that $Q \xivac = \xivac$ and that
\begin{equation}   \label{eqn:41xb}
Q ( v_1 \otimes \cdots \otimes v_n ) 
= q^n v_1 \otimes \cdots \otimes v_n, 
\ \ \forall \, n \in \bN \mbox{ and } v_1, \ldots , v_n \in \bC^d.
\end{equation}  
\end{lemma}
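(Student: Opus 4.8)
The plan is to expand the commutator on the left-hand side of (\ref{eqn:41xa}) into its four natural pieces,
\[
\bigl[ \, L(v), R(w) \, \bigr] \ + \ \bigl[ \, L(v), R^{*}(w) \, \bigr]
\ + \ \bigl[ \, L^{*}(v), R(w) \, \bigr] \ + \ \bigl[ \, L^{*}(v), R^{*}(w) \, \bigr] ,
\]
and to evaluate each of them separately. Since all the operators that occur are bounded on $\cT_{d;q}$, it suffices to check the relevant identities on the dense subspace spanned by $\xivac$ together with the simple tensors $v_1 \otimes \cdots \otimes v_n$ ($n \in \bN$, $v_1, \ldots , v_n \in \bC^{d}$), and then extend by continuity.

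First I would dispose of the two ``same-sidedness'' commutators. A direct inspection of the actions shows that both $L(v) R(w)$ and $R(w) L(v)$ send a simple tensor $v_1 \otimes \cdots \otimes v_n$ to $v \otimes v_1 \otimes \cdots \otimes v_n \otimes w$, and send $\xivac$ to $v \otimes w$; hence $\bigl[ L(v), R(w) \bigr] = 0$, and by taking adjoints (recalling $L(v)^{*} = L^{*}(v)$, $R(w)^{*} = R^{*}(w)$) one gets $\bigl[ L^{*}(v), R^{*}(w) \bigr] = 0$ as well.

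The substance of the computation lies in the two mixed commutators. For $\bigl[ L(v), R^{*}(w) \bigr]$ I would compute $L(v) R^{*}(w) ( v_1 \otimes \cdots \otimes v_n )$ by first applying the explicit formula for the right $q$-annihilation operator and then prepending $v$; on the other hand I would compute $R^{*}(w) L(v) ( v_1 \otimes \cdots \otimes v_n )$ by applying $R^{*}(w)$ to the $(n+1)$-fold tensor $v \otimes v_1 \otimes \cdots \otimes v_n$. Matching the $n+1$ summands produced in the second order against the $n$ summands produced in the first order (after a shift of the summation index), one finds that everything cancels except for a single leftover term — arising from the annihilation of the freshly created leftmost factor $v$ — equal to $q^{n} \langle v,w\rangle \, v_1 \otimes \cdots \otimes v_n$; combined with the elementary check on $\xivac$, this gives $\bigl[ L(v), R^{*}(w) \bigr] = - \langle v,w\rangle \, Q$. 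A completely parallel argument, in which the surviving term instead comes from the annihilation of the freshly appended rightmost factor $w$, yields $\bigl[ L^{*}(v), R(w) \bigr] = \langle w,v\rangle \, Q$. Adding the four pieces then gives $0 + ( - \langle v,w\rangle Q ) + \langle w,v\rangle Q + 0 = ( \langle w,v\rangle - \langle v,w\rangle ) \, Q$, which is exactly (\ref{eqn:41xa}).

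The only point that requires a little care is the index bookkeeping in the two mixed commutators — keeping the powers of $q$ correctly aligned while comparing the two orders of composition, and tracking which summand is the one with no partner — but this is entirely routine once the sums are written out, so I do not anticipate a genuine obstacle.
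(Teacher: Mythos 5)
Your proposal is correct and follows essentially the same route as the paper: both split the commutator by bilinearity into the four pieces, observe that $\bigl[ L(v), R(w) \bigr] = \bigl[ L^{*}(v), R^{*}(w) \bigr] = 0$, and identify the single surviving term $q^{n}\langle v,w\rangle$ (resp.\ $q^{n}\langle w,v\rangle$) in each mixed commutator as a multiple of $Q$. Your index bookkeeping and the signs $\bigl[ L(v), R^{*}(w) \bigr] = -\langle v,w\rangle\,Q$ and $\bigl[ L^{*}(v), R(w) \bigr] = \langle w,v\rangle\,Q$ agree exactly with the paper's computation.
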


\begin{proof} By comparing how $L^{*} (v) R (w)$ and 
$R (w) L^{*} (v)$ act on tensors $v_1 \otimes \cdots \otimes v_n$,
one immediately finds that
$\bigl[ \, L^{*} (v)  , R (w) \, \bigr] 
= \langle w,v \rangle \, Q$. 
A similar calculation leads to 
$\bigl[ \, L (v) , R^{*} (w) \, \bigr] 
= - \langle v,w \rangle \, Q$. 
Since it is immediate that 
$\bigl[ \, L (v),  R (w) \, \bigr] 
= \bigl[ \, L^{*} (v),  R^{*} (w) \, \bigr] = 0$, the required 
formula (\ref{eqn:41xa}) follows from the bilinearity of the commutator.
\end{proof}

\begin{corollary}   \label{cor:42}
Let $v,w \in \bC^{d}$ be such that $\langle v,w \rangle \in \bR$.
Then $L (v) + L^{*} (v)$ commutes with $R (w) + R^{*} (w)$ and, 
consequently, the product
$\bigl( \, L(v) + L^{*} (v) \, \bigr) \cdot 
\bigl( \, R (w) + R^{*} (w) \, \bigr)$ is a selfadjoint operator.
\hfill  $\blacksquare$
\end{corollary}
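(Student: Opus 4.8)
The plan is to read this off directly from Lemma~\ref{lemma:41}. First I would apply that lemma to the given pair of vectors $v,w \in \bC^d$, obtaining
\[
\bigl[ \, L(v) + L^*(v) \, , \, R(w) + R^*(w) \, \bigr]
= \bigl( \, \langle w,v \rangle - \langle v,w \rangle \, \bigr) \, Q .
\]
Next I would invoke the conjugate-symmetry of the inner product on $\bC^d$: one always has $\langle w,v \rangle = \overline{\langle v,w \rangle}$, and since we are assuming $\langle v,w \rangle \in \bR$ this gives $\langle w,v \rangle = \langle v,w \rangle$. Hence the scalar coefficient $\langle w,v \rangle - \langle v,w \rangle$ equals $0$, so the right-hand side above vanishes and $L(v)+L^*(v)$ commutes with $R(w)+R^*(w)$.

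For the ``consequently'' clause, I would set $A := L(v)+L^*(v)$ and $B := R(w)+R^*(w)$ and note that each of $A,B$ is selfadjoint: indeed $A^* = L(v)^* + L^*(v)^* = L^*(v) + L(v) = A$, using that $L^*(v)$ is by definition the adjoint of $L(v)$ (so that $L^*(v)^* = L(v)$), and the same computation gives $B^* = B$. Combining this with the commutation $AB = BA$ just established, I would conclude
\[
(AB)^* = B^* A^* = BA = AB ,
\]
so that the product $\bigl( L(v) + L^*(v) \bigr)\bigl( R(w) + R^*(w) \bigr)$ is selfadjoint, as claimed.

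I do not expect any genuine obstacle here: the corollary is an immediate consequence of Lemma~\ref{lemma:41}, and the only place where a word of justification is needed is the step turning the hypothesis $\langle v,w \rangle \in \bR$ into the identity $\langle w,v \rangle = \langle v,w \rangle$, which is nothing more than conjugate-symmetry of the inner product.
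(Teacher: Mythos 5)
Your proposal is correct and follows exactly the route the paper intends: the corollary is stated with no written proof precisely because it is immediate from Lemma~\ref{lemma:41}, the hypothesis $\langle v,w\rangle\in\bR$ killing the scalar $\langle w,v\rangle-\langle v,w\rangle$ by conjugate-symmetry, after which selfadjointness of the product of two commuting selfadjoint operators finishes the argument. Nothing is missing.
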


We conclude this review with a notation that will be useful in the 
next subsection: a $q$-annihilation operator is a weighted sum of 
``annihilations at specified distance'' (counting from the left or from 
the right, as needed), and we want to have names for the individual 
terms of this weighted sum.

\begin{notation-and-remark}   \label{def:43}
Let $\cTalg_{d;q}$ be the dense subspace of $\cT_{d;q}$ which is 
obtained by only considering in Equation (\ref{eqn:41b}) the algebraic 
direct sum of the spaces $( \bC^d )^{\otimes n}$.  For $v \in \bC^d$
and $k \in \bN$ we will denote by $A_k^{(*)} (v)$ the linear operator 
on $\cTalg_{d;q}$ determined by the requirements that 
$[A_k^{(*)} (v)] ( \xivac ) = 0$ and that for $n \in \bN$ and 
$v_1, \ldots , v_n \in \bC^d$ we have
\[
[A_k^{(*)} (v)] (v_1 \otimes \cdots \otimes v_n)  
= \left\{  \begin{array}{ll}
0, & \mbox{ if $n < k$};       \\
q^{k-1} \langle v_k , v \rangle v_1 \otimes \cdots \otimes v_{k-1} 
                        \otimes v_{k+1} \otimes \cdots \otimes v_n,
   & \mbox{ if $n \geq k$.}
\end{array}  \right.
\]
The ``$(*)$'' included in the notation is merely a reminder 
that $A_k^{(*)} (v)$ will be viewed as a piece of the annihilation 
operator $L^{*}(v)$ (but we are not trying to identify $A_k^{(*)} (v)$ 
as the adjoint of some other operator).  It is clear that one has
the formula
\begin{equation}   \label{eqn:41y}
[ L^{*} (v) ] (v_1 \otimes \cdots \otimes v_n) 
= \sum_{k=1}^n  [ A_k^{(*)} (v) ] (v_1 \otimes \cdots \otimes v_n),
\end{equation}   
holding for every $n \in \bN$ and $v, v_1, \ldots , v_n \in \bC^d$.

For the sake of uniformity in notations, for every $v \in \bC^d$ we will 
also consider a linear operator $A_1^{(1)} (v)$ on $\cTalg_{d;q}$,
which is defined by
\[
[ A_1^{(1)} (v) ] ( \xi ) = [ L(v) ] ( \xi ), 
\ \ \forall \, \xi \in \cTalg_{d;q}.
\]
The point of this extra notation is that we get to have a 
full collection of linear operators 
\[
A_k^{( \xsigma )} (v), 
\mbox{ defined for $v \in \bC^d$, $\xsigma \in \{ 1, * \}$
       and certain $k \in \bN$ }
\]
(where if $\xsigma = *$ then $k$ can take any value in $\bN$, but if 
$\xsigma =1 $ then we are forced to put $k=1$).

Symmetrically, we consider the collection of linear operators
\[
B_k^{( \xsigma )} (v), 
\mbox{ defined for $v \in \bC^d$, $\xsigma \in \{ 1, * \}$ 
       and certain $k \in \bN$ }
\]
where for $\xsigma =1$ we impose $k=1$ and define $B_1^{(1)} (v)$ 
to be the restriction of $R(v)$ to $\cTalg_{d;q}$, while for 
$\xsigma = *$ and arbitrary $k \in \bN$ we use the formula 
\[
[B_k^{(*)} (v)] (v_1 \otimes \cdots \otimes v_n)  
= \left\{  \begin{array}{ll}
0, & \mbox{ if $n < k$};       \\
q^{k-1} \langle v_{n-k+1} , v \rangle v_1 \otimes \cdots \otimes v_{n-k} 
                     \otimes v_{n-k+2} \otimes \cdots \otimes v_n,
   & \mbox{ if $n \geq k$.}
\end{array}   \right.
\]
The operators $B_k^{(*)} (v)$ are ``pieces of the right $q$-annihilation 
operator associated to $v$'', in the sense that one has
\begin{equation}   \label{eqn:41z}
[ R^{*} (v) ] (v_1 \otimes \cdots \otimes v_n) 
= \sum_{k=1}^n  [ B_k^{(*)} (v) ] (v_1 \otimes \cdots \otimes v_n),
\end{equation}   
holding for every $n \in \bN$ and $v, v_1, \ldots , v_n \in \bC^d$.
\end{notation-and-remark} 

$\ $

\subsection{A two-sided \boldmath{$q$}-Wick formula for 
an alternating left-right monomial}

$\ $

\noindent
In this subsection we fix the following data: a positive integer $d$ and 
a real number $q \in (-1,1)$ (same as in subsection 3.1), and also:

$\bullet$ a positive integer $n$;

$\bullet$ a tuple $\ee = ( \ee (1), \ldots , \ee (2n) ) \in \cD_{(1,*)} (2n)$ 
(with $\cD_{(1,*)} (2n)$ as in Definition \ref{def:34});

$\bullet$ a family of vectors $u_1, \ldots , u_{2n} \in \bC^d$.  

\noindent
We will use the notations related to $q$-creation and $q$-annihilation 
operators on the deformed Fock space $\cT_{d;q}$ that were introduced in the 
preceding subsection, and the various notations and facts pertaining to $\ee$ 
that were discussed in Section 2.  

In reference to the data fixed above, we consider the product of operators
\begin{equation}   \label{eqn:42a}
M := R^{\ee (2n)} (u_{2n}) \cdot L^{\ee (2n-1)} (u_{2n-1}) \cdots 
R^{\ee (2)} (u_2) \cdot L^{\ee (1)} (u_1) \in B( \cT_{d;q} ),
\end{equation}
and we will examine the action of $M$ on the vacuum vector 
$\xivac \in \cT_{d;q}$.  It is quite easy (by keeping in mind 
how $\cD_{(1,*)} (2n)$ is defined) to see that $\xivac$ is an eigenvector 
for $M$; the goal of the present subsection is to describe explicitly 
what is the corresponding eigenvalue.  The eigenvalue will appear (cf. 
Proposition \ref{prop:49} below) as a sum over pair-partitions, giving
a formula of ``Wick'' type.

The fact that $M$ in (\ref{eqn:42a}) has alternating ``$L$'' and ``$R$''
factors is due to the intended use of $M$, towards finding a relation to 
semi-meander polynomials.  It is actually easy to extend the result of 
Proposition \ref{prop:49} to a two-sided $q$-Wick formula for general 
products of operators $L^{\xsigma} (v)$ and $R^{\xsigma} (v)$, with 
$\xsigma \in \{ 1,* \}$ and $v \in \bC^d$ -- see Section 4.1 below.

\begin{notation}   \label{def:44}
Recall that for every $h \in \ee^{-1} (*) \subseteq \{ 2, 3, \ldots, 2n \}$
we defined a choice number $\Choice_{\ee} (h) \in \bN$ (cf. Notation 
\ref{rem:38}).  We will denote
\begin{equation}   \label{eqn:42b}
\xLambda := \Bigl\{ ( \xlambda_1, \ldots , \xlambda_{2n} ) \in \bN^{2n}
\begin{array}{ll}
\vline  &  \mbox{ if $h \in \ee^{-1} (1)$, then $\xlambda_h = 1$;}  \\
\vline  &  \mbox{ if $h \in \ee^{-1} (*)$, then 
                  $1 \leq \xlambda_h \leq \Choice_{\ee} (h)$ }
\end{array}  \Bigr\}  .
\end{equation}
Note that a $(2n)$-tuple in $\xLambda$ is nothing but a tuple of choices
$( \xlambda_h )_{h \in \ee^{-1} (*)}$ as considered in (\ref{eqn:38c}) of 
Remark \ref{rem:38}, where we ``filled in some values of $1$'' by 
putting $\xlambda_k = 1$ for $k \in \ee^{-1} (1)$.  Thus $\xLambda$ is in 
natural bijection with the set of pair-partitions 
$\Phi_n^{-1} ( \ee ) \subseteq \cP_2 (2n)$, in the way discussed in 
Remark \ref{rem:38} and in Example \ref{example:39}.

The role of $\xLambda$ in calculations related to $M ( \xivac )$ shows up 
in the following lemma. 
\end{notation}

\begin{lemma}   \label{lemma:45}
One has
\begin{equation}   \label{eqn:45xc}
M ( \xivac ) = 
\end{equation}
\[
= \sum_{ (\xlambda_1, \ldots , \xlambda_{2n} ) \in \xLambda} 
\ [ B_{\xlambda_{2n}}^{( \ee (2n) )} (u_{2n}) \cdot
A_{\xlambda_{2n-1}}^{( \ee (2n-1) )} ( u_{2n-1} ) \cdots 
B_{\xlambda_2}^{( \ee (2) )} (u_2) \cdot
A_{\xlambda_1}^{( \ee (1) )} (u_1) ]  ( \xivac ),
\]
where the operators of the form $A_k^{( \xsigma )} (v)$ 
and $B_k^{ ( \xsigma )} (v)$ are as in Notation \ref{def:43}.
\end{lemma}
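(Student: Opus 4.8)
The plan is to prove \eqref{eqn:45xc} by an ``expand, then discard the vanishing terms'' argument. First I would rewrite each of the $2n$ factors of the monomial $M$ from \eqref{eqn:42a} as a pointwise-finite sum of the operators from Notation \ref{def:43}: recalling the conventions $A_1^{(1)} (v) = L(v)$ and $B_1^{(1)} (v) = R(v)$, and using \eqref{eqn:41y} and \eqref{eqn:41z}, we have for every $h$ that
\[
L^{\ee (h)} (u_h) = \sum_{k} A_k^{( \ee (h) )} (u_h), \qquad R^{\ee (h)} (u_h) = \sum_{k} B_k^{( \ee (h) )} (u_h),
\]
where the sum is the single term $k = 1$ when $\ee (h) = 1$, and runs over all $k \in \bN$ when $\ee (h) = *$. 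Substituting the appropriate one of these for each factor of $M$ (an ``$L$''-factor at each odd position, an ``$R$''-factor at each even position) and expanding, one obtains $M ( \xivac )$ as a sum, over all tuples $( \xlambda_1, \ldots , \xlambda_{2n} )$ with $\xlambda_h = 1$ when $\ee (h) = 1$ and $\xlambda_h \in \bN$ arbitrary when $\ee (h) = *$, of exactly the expressions $[ B_{\xlambda_{2n}}^{( \ee (2n) )} (u_{2n}) \cdots A_{\xlambda_1}^{( \ee (1) )} (u_1) ] ( \xivac )$ that appear on the right-hand side of \eqref{eqn:45xc}; the only difference from \eqref{eqn:45xc} is that this preliminary sum ranges over a larger index set than $\xLambda$. (Applied to $\xivac$ the sum is finite, since the intermediate tensors that occur have length at most $n$, and $A_k^{(*)}$, $B_k^{(*)}$ annihilate any tensor of length $< k$.)

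Next I would show that a summand in this preliminary sum is zero unless its index tuple lies in $\xLambda$ (as defined in \eqref{eqn:42b}). The key point, proved by induction on $j \in \{ 0, 1, \ldots , 2n \}$, is that after applying the rightmost $j$ of the $2n$ operator-factors of a summand to $\xivac$, the result is either $0$ or a scalar multiple of a tensor of length exactly
\[
p_j := \bigl| \{ 1 \le i \le j \mid \ee (i) = 1 \} \bigr| - \bigl| \{ 1 \le i \le j \mid \ee (i) = * \} \bigr| \qquad ( \geq 0 \text{ by the Dyck property of } \ee ),
\]
and that this length does not depend on the choice tuple $( \xlambda_1, \ldots , \xlambda_{2n} )$. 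The inductive step uses only that $A_1^{(1)}$ and $B_1^{(1)}$ raise tensor-length by $1$ (the case $\ee (j) = 1$, where moreover $\xlambda_j$ is forced to equal $1$), while $A_k^{(*)}$ and $B_k^{(*)}$ send a length-$\ell$ tensor to $0$ if $\ell < k$ and to a scalar multiple of a length-$(\ell-1)$ tensor if $\ell \ge k$ (the case $\ee (j) = *$, where $p_{j-1} \ge 1$ by the Dyck property, so the length stays $\ge 0$). Hence a summand with $\ee (j) = *$ and $\xlambda_j > p_{j-1}$ vanishes; and since, straight from the definition \eqref{eqn:38a}, $p_{j-1} = \Choice_{\ee} (j)$ for every $j \in \ee^{-1} (*)$, the surviving summands are precisely those with $1 \le \xlambda_j \le \Choice_{\ee} (j)$ for all $j \in \ee^{-1} (*)$, i.e. those indexed by $\xLambda$. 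Keeping in the sum all of the (possibly individually zero) terms indexed by $\xLambda$ then yields \eqref{eqn:45xc}.

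The argument is essentially careful index-bookkeeping, and I do not anticipate a real obstacle. The one step that carries the content is the observation that the running tensor-length $p_j$ is determined by $\ee$ alone, independently of the choices $\xlambda_i$ --- this is what lets the reduction to $\xLambda$ go through cleanly --- but it is an immediate consequence of the length behaviour of the $A$'s and $B$'s recalled above. Note in particular that the alternating $L$/$R$ pattern of $M$ plays no role in this lemma: $A_1^{(1)}$ and $B_1^{(1)}$ act identically as far as tensor-length is concerned, and similarly for $A_k^{(*)}$ and $B_k^{(*)}$.
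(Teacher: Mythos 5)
Your proposal is correct and is essentially the paper's own argument: both hinge on tracking the intermediate tensor length $p_h$ (determined by $\ee$ alone via the Dyck property) and on the identity $p_{h-1} = \Choice_{\ee}(h)$ for $h \in \ee^{-1}(*)$. The only cosmetic difference is that you expand each annihilation factor over all of $\bN$ and then discard the vanishing terms, whereas the paper truncates each sum to $\sum_{\xlambda=1}^{p_{h-1}}$ as it goes.
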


\begin{proof}
Referring to the definition of $M$ in Equation (\ref{eqn:42a}), 
consider the vectors $\xi_0, \xi_1, \ldots , \xi_{2n}$
$\in \cTalg_{d;q}$ obtained by
putting $\xi_0 := \xivac$ and then
\begin{equation}   \label{eqn:45xd}
\left\{   \begin{array}{l}
\xi_1 = [ L^{\ee (1)} (u_1) ] ( \xi_0 ),
\ \xi_2 = [ R^{\ee (2)} (u_2) ] ( \xi_1 ), \ldots ,             \\
                                                                \\
\xi_{2n-1} = [ L^{\ee (2n-1)} (u_{2n-1}) ] ( \xi_{2n-2} ),
\ \xi_{2n} = [ R^{\ee (2n)} (u_{2n}) ] ( \xi_{2n-1} ).
\end{array}  \right.
\end{equation}
Clearly, $M( \xivac ) = \xi_{2n}$.

For every $p \in \bN \cup \{ 0 \}$, let $\cV_p$ denote the copy of 
$( \bC^d )^{\otimes p}$ which sits inside $\cTalg_{d;q}$.  In view of how 
$q$-creation and $q$-annihilation operators act on a space $\cV_p$ (by
mapping it to $\cV_{p + 1}$ and respectively $\cV_{p - 1}$), it is 
immediate that the vectors introduced in (\ref{eqn:45xd}) are such that 
$\xi_1 \in \cV_{p_1}$, 
$\xi_2 \in \cV_{p_2}, \ldots , \xi_{2n} \in \cV_{p_{2n}}$, with 
\begin{equation}   \label{eqn:45xe}
p_h := 
\ \vline \ \{ 1 \leq i \leq h \mid \ee (i) = 1 \} \ \vline \ - 
\ \vline \ \{ 1 \leq i \leq h \mid \ee (i) = * \} \ \vline\ 
\, , \mbox{ for $1 \leq h \leq 2n$}.
\end{equation}

For every $h \in \{ 1, \ldots , 2n \}$ such that $\ee (h) =1$ we have
\begin{equation}   \label{eqn:45xf}
\xi_h = \left\{  \begin{array}{l}
\mbox{ $[ L(u_h) ] ( \xi_{h-1} )$, if $h$ is odd}    \\
\mbox{ $[ R(u_h) ] ( \xi_{h-1} )$, if $h$ is even}
\end{array}  \right.
= \left\{  \begin{array}{l}
\mbox{ $[ A_1^{(1)} (u_h) ] ( \xi_{h-1} )$, if $h$ is odd}     \\
\mbox{ $[ B_1^{(1)} (u_h) ] ( \xi_{h-1} )$, if $h$ is even}.
\end{array}  \right. 
\end{equation}
On the other hand, for every $h \in \{ 1, \ldots , 2n \}$ such that 
$\ee (h) =*$ we have
\begin{equation}   \label{eqn:45xg}
\xi_h = \left\{  \begin{array}{l}
\mbox{ $[ L^{*} (u_h) ] ( \xi_{h-1} )$, if $h$ is odd}     \\
\mbox{ $[ R^{*} (u_h) ] ( \xi_{h-1} )$, if $h$ is even}
\end{array}  \right.
= \left\{  \begin{array}{ll}
\sum_{\xlambda = 1}^{p_{h-1}} [ A_{\xlambda}^{(*)} (u_h) ] ( \xi_{h-1} ), 
                                                   &  \mbox{ if $h$ is odd}     \\
                                                   &                            \\
\sum_{\xlambda = 1}^{p_{h-1}} [ B_{\xlambda}^{(*)} (u_h) ] ( \xi_{h-1} ), 
                                                   &  \mbox{ if $h$ is even.}
\end{array}  \right. 
\end{equation}
In (\ref{eqn:45xg}) we took into account that the action of an operator 
$L^{*} (v)$ on a specified $\cV_p$ can be replaced by the action of 
$\sum_{\xlambda =1}^p A_{\xlambda}^{(*)} (v)$ (cf. Equation (\ref{eqn:41y})
in Remark \ref{def:43}), with a similar statement holding for $R^{*} (v)$ and
$\sum_{\xlambda =1}^p B_{\xlambda}^{(*)} (v)$.

In connection to Equation (\ref{eqn:45xg}) we also make the remark that 
$p_{h-1} = \Choice_{\ee} (h)$ (compare the definition of $p_{h-1}$ to
Equation (\ref{eqn:38a}) in Remark \ref{rem:38}); hence the sums indicated 
in (\ref{eqn:45xg}) have precisely $\Choice_{\ee} (h)$ terms.

When we write the recursion for the vectors $\xi_1, \xi_2, \ldots , \xi_{2n}$
by using the Equations (\ref{eqn:45xf}) and (\ref{eqn:45xg}), we get to have 
$\xi_{2n}$ written precisely as on the right-hand side of 
Equation (\ref{eqn:45xc}).  Since we know that $\xi_{2n} = M( \xivac )$, this 
concludes the proof.
\end{proof}

We now concentrate our attention on a tuple 
$( \xlambda_1, \ldots , \xlambda_{2n} ) \in \xLambda$.
We know that $( \xlambda_h )_{h \in \ee^{-1} (*)}$ is a 
tuple of choices which parametrizes a pair-partition 
$\pi \in \Phi_n^{-1} ( \ee ) \subseteq \cP_2 (2n)$, where  
the map $\Phi_n : \cP_2 (2n) \to \cD_{(1,*)} (2n)$ is as 
described in Notation \ref{def:36}, and where the procedure
for finding $\pi$ is described in detail in Remark \ref{rem:38}
and Example \ref{example:39}.

\begin{lemma}   \label{lemma:46}
With $\xlambda \in \xLambda$ and $\pi \in \Phi_n^{-1} ( \ee )$ as above,
we have
\begin{equation}   \label{eqn:46xa}
[ B_{\xlambda_{2n}}^{( \ee (2n) )} (u_{2n})  \cdot
A_{\xlambda_{2n-1}}^{( \ee (2n-1) )} ( u_{2n-1} )  \cdots 
B_{\xlambda_2}^{( \ee (2) )} (u_2) \cdot
A_{\xlambda_1}^{( \ee (1) )} (u_1) ] ( \xivac ) = c \, \xivac,
\end{equation}
where the scalar $c$ is described as follows:
\begin{equation}   \label{eqn:46xb}
c = \prod_{  \begin{array}{c}
{\scriptstyle \{ k,h \} \ pair \ in \ \Ht_n \cdot \pi} \\ 
{\scriptstyle with \ \ee (h) = *, \ \ee (k) = 1}
\end{array} } \  \Bigl( \, \langle u_k, u_h \rangle \cdot 
                           q^{\xlambda_h - 1} \, \Bigr) .
\end{equation}
\end{lemma}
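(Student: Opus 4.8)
The plan is to peel off the operators in $(\ref{eqn:46xa})$ one at a time, from the inside out, and to keep track of the vector obtained at each stage. I put $\eta_0 := \xivac$ and, for $0 \le h \le 2n$, let $\eta_h$ be the vector obtained by applying the innermost $h$ of these operators to $\xivac$; thus $\eta_h = [A_{\xlambda_h}^{( \ee(h) )}(u_h)]( \eta_{h-1} )$ for $h$ odd, $\eta_h = [B_{\xlambda_h}^{( \ee(h) )}(u_h)]( \eta_{h-1} )$ for $h$ even, and $\eta_{2n}$ is the left-hand side of $(\ref{eqn:46xa})$. For $0 \le h \le 2n$ I put $S_h := \{\, 1 \le j \le h : \ee(j) = 1 \text{ and the partner of } j \text{ in } \Ht_n \cdot \pi \text{ is } > h \,\}$, and I let $\tau_h$ denote the simple tensor whose factors are the vectors $u_j$ with $j \in S_h$, arranged so that the factors with $j$ odd come first, in decreasing order of $j$, followed by the factors with $j$ even, in increasing order of $j$ (so $\tau_h = \xivac$ when $S_h = \emptyset$). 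I would prove, by induction on $h$, that
\begin{equation*}
\eta_h \ = \ c_h \, \tau_h , \qquad \text{ where } \quad c_h := \prod_{ \substack{ \{ k,k' \} \text{ a pair of } \Ht_n \cdot \pi \\ \ee(k') = *, \ k' \le h } } \langle u_k , u_{k'} \rangle \, q^{\, \xlambda_{k'} - 1}
\end{equation*}
(an empty product, hence $c_0 = 1$, when $h = 0$). Granting this, the case $h = 2n$ gives $S_{2n} = \emptyset$ (every ``$1$''-point has its partner at some height $\le 2n$), so that $\tau_{2n} = \xivac$, while $c_{2n}$ is precisely the scalar $c$ of $(\ref{eqn:46xb})$; this is exactly $(\ref{eqn:46xa})$.

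The base case $h = 0$ is trivial. For the inductive step, suppose first that $\ee(h) = 1$; then $\xlambda_h = 1$ and the operator applied at step $h$ is $A_1^{(1)}(u_h) = L(u_h)$ (if $h$ is odd) or $B_1^{(1)}(u_h) = R(u_h)$ (if $h$ is even), i.e.\ it prepends, resp.\ appends, the factor $u_h$. Since the partner of $h$ in $\Ht_n \cdot \pi$ has larger height and has not yet been processed, no pair of $\Ht_n \cdot \pi$ is completed at step $h$, so $c_h = c_{h-1}$; moreover $S_h = S_{h-1} \cup \{ h \}$, and $h$ is the largest element of its own parity in $S_h$. By the ordering convention for $\tau$, the new factor $u_h$ therefore belongs at the very front of the tensor if $h$ is odd, and at the very end if $h$ is even -- which is exactly what prepending by $L(u_h)$, resp.\ appending by $R(u_h)$, produces. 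This disposes of the case $\ee(h) = 1$.

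The substance of the argument is the case $\ee(h) = *$. Recall from Remark $\ref{rem:38}$ and Example $\ref{example:39}$ that $\xlambda$ encodes $\pi$ as follows: when the ``$*$''-decorated point at height $h$ is treated, its partner is taken to be the $\xlambda_h$-th point in the list of all currently available ``$1$''-decorated points -- those of height $< h$ not yet used -- this list being read from the point at height $h$ along the boundary of the rectangle towards its top side, i.e.\ clockwise if $h$ is odd (so the point lies on the left side) and counterclockwise if $h$ is even (right side). By the induction hypothesis $\eta_{h-1} = c_{h-1} \tau_{h-1}$, and the index set of the factors of $\tau_{h-1}$ is $S_{h-1}$, which is precisely the set of currently available points. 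The key point is that reading the factors of $\tau_{h-1}$ from left to right, when $h$ is odd, reproduces exactly this clockwise scanning order starting from $h$, while reading them from right to left, when $h$ is even, reproduces the counterclockwise scanning order starting from $h$; this is verified by following the scan, treating separately its stretch on $h$'s own side of the rectangle (which runs through the available points there in order of increasing distance from $h$) and its stretch, after wrapping over the top, on the opposite side. Granting this, the $\xlambda_h$-th available point is the factor of $\tau_{h-1}$ sitting in slot $\xlambda_h$ counted from the left ($h$ odd), resp.\ from the right ($h$ even); denote it $u_k$, so that $\{ k, h \}$ is the pair of $\Ht_n \cdot \pi$ whose ``$*$''-endpoint is $h$ (and $k < h$). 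Applying $A_{\xlambda_h}^{(*)}(u_h)$ ($h$ odd) or $B_{\xlambda_h}^{(*)}(u_h)$ ($h$ even) to $\tau_{h-1}$, the formulas of Notation $\ref{def:43}$ show that the effect is to delete the factor $u_k$ and multiply by $q^{\, \xlambda_h - 1} \langle u_k , u_h \rangle$. Hence $\eta_h$ equals $c_{h-1} \, q^{\, \xlambda_h - 1} \langle u_k , u_h \rangle$ times the tensor obtained from $\tau_{h-1}$ by deleting $u_k$. The scalar is $c_h$, the newly introduced factor being exactly the one indexed by the pair $\{ k, h \}$, the unique pair of $\Ht_n \cdot \pi$ completed at step $h$; and $S_h = S_{h-1} \setminus \{ k \}$, while deleting a single factor leaves the remaining factors in the order prescribed for $\tau_h$. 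This completes the induction, and with it the proof.

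I expect the one genuinely delicate point to be the ``key point'' of the third paragraph -- matching the left-to-right (resp.\ right-to-left) order of the factors of $\tau_{h-1}$ with the clockwise (resp.\ counterclockwise) order in which $\xlambda$ scans the available points to build $\pi$. This forces one to keep several conventions straight at once (heights increase downwards; via $\Ht_n$ the labels $1, \ldots, n$ sit on the left side and $n+1, \ldots, 2n$ on the right side; the scan always moves towards the top) and to handle the wrap-around over the top of the rectangle that occurs when $\xlambda_h$ is large; it is cleanest to check it by splitting the scan into its two stretches -- on $h$'s side, then on the other side -- as indicated above.
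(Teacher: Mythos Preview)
Your proof is correct and follows essentially the same approach as the paper's: both track the intermediate vectors $\eta_h$ and show inductively that each is a scalar times a simple tensor, with the scalar accumulating the factors $\langle u_k, u_h\rangle\, q^{\xlambda_h - 1}$ as pairs are completed. The paper's proof is in fact less detailed than yours --- it writes $\eta_h = c_h\,\zeta_h$ and describes the recursion, but explicitly leaves the correspondence between the tensor-manipulation and the construction of $\pi$ from the choice-numbers ``as an exercise to the reader''; your explicit description of $\tau_h$ (odd indices first in decreasing order, then even indices in increasing order) and the verification of the ``key point'' is precisely that exercise carried out.
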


\begin{proof}  Consider the vectors 
$\eta_0, \eta_1, \ldots , \eta_{2n} \in \cTalg_{d;q}$ obtained by
putting $\eta_0 := \xivac$ and then
\begin{equation}   \label{eqn:46xc}
\left\{   \begin{array}{l}
\eta_1 = [ A_{\xlambda_1}^{\ee (1)} (u_1) ] ( \eta_0 ),
\ \eta_2 = [ B_{\xlambda_2}^{\ee (2)} (u_2) ] ( \eta_1 ), \ldots ,             \\
                                                                              \\
\eta_{2n-1} = [ A_{\xlambda_{2n-1}}^{\ee (2n-1)} (u_{2n-1}) ] ( \eta_{2n-2} ),
\ \eta_{2n} = [ B_{\xlambda_{2n}}^{\ee (2n)} (u_{2n}) ] ( \eta_{2n-1} ).
\end{array}  \right.
\end{equation}
The vector on the left-hand side of Equation (\ref{eqn:46xa}) is then 
$\eta_{2n}$.

We next observe that
$\eta_1 \in \cV_{p_1}, \eta_2 \in \cV_{p_2}, \ldots , 
\eta_{2n} \in \cV_{p_{2n}}$, 
where $p_1, \ldots, p_{2n} \in \bN \cup \{ 0 \}$ are exactly as in 
Equation (\ref{eqn:45xe}) in the proof of the preceding lemma, and the 
spaces ``$\cV_p$'' have the same meaning as in the said proof.  The 
specifics of how the operators $A_{\xlambda}^{(\xsigma)} (v)$ and 
$B_{\xlambda}^{(\xsigma)} (v)$ act on tensors actually give us that 
every $\eta_h$ is of the form
\begin{equation}   \label{eqn:46xd}
\eta_h = c_h \, \zeta_h,
\end{equation}
where $c_h \in \bC$ and $\zeta_h$ is a tensor product of $p_h$ 
vectors 
\footnote{ It may happen (e.g. for $h = 2n$) that $p_h = 0$; in such 
a case, the vector $\zeta_h$ of Equation (\ref{eqn:46xd}) is 
$\zeta_h = \xivac$. } 
picked 
(in some order) out of $u_1, \ldots, u_h$.  We can, moreover, follow on
how $c_h$ and $\zeta_h$ are obtained from $c_{h-1}$ and $\zeta_{h-1}$, 
depending on whether $\ee (h)$ is a ``$1$'' or a ``$*$'':

\noindent
$\bullet$ If $\ee (h) = 1$ then $\zeta_h$ is obtained out of $\zeta_{h-1}$ 
by adding a component (either at the left end or at the right end of the 
tensor), and we have $c_h = c_{h-1}$.

\noindent $\bullet$ If $\ee (h) = *$ then $\zeta_h$ is obtained out of 
$\zeta_{h-1}$ by removing a component ``$u_k$'' (which could be anywhere in 
the tensor), and we have 
\[
c_h = \langle u_k, u_h \rangle \, q^{\xlambda_h - 1} \cdot c_{h-1} .
\]

We leave it as an exercise to the reader to check that the above procedure
of adding and removing components in the tensors $\zeta_h$ corresponds 
precisely to the procedure used in Section 2 in order to obtain the 
pair-partition $\pi \in \Phi_n^{-1} ( \ee )$ which is encoded by the 
choice-numbers $( \xlambda_h )_{h \in \ee^{-1} (*)}$.  In particular, the 
scalar $c_{2n}$ comes out to be a product indexed by pairs of the partition 
$\Ht_n \cdot \pi$, exactly in the way stated on the right-hand side of 
Equation (\ref{eqn:46xb}).  Since $\zeta_{2n} = \xivac$ and $\eta_{2n}$ is 
the vector on the left-hand side of (\ref{eqn:46xa}), this concludes the proof.
\end{proof}

\begin{example}   \label{example:47}
For illustration, we show how the procedure explained in the proof of 
Lemma \ref{lemma:46} works on a concrete example.  Let us consider again 
the situation discussed in Example \ref{example:39}, where $n = 5$, 
$\ee = (1,1,1,*,*,1,1,*,*,*) \in \cD_{(1,*)} (10)$, and the tuple 
$(\xlambda_1, \ldots , \xlambda_{10} ) \in \xLambda$ used for Figure 7 is
$(1,1,1,3,1,1,1,2,2,1)$.  Thus the vector considered in Lemma \ref{lemma:46}
is, in this concrete example:
\[
[ B_1^{(*)} (u_{10}) \, A_2^{(*)} (u_9) \, B_2^{(*)} (u_8) \, A_1^{(1)} (u_7)
\, B_1^{(1)} (u_6) \, A_1^{(*)} (u_5)  \, \times
\]
\begin{equation}   \label{eqn:47xa}
\times \,  B_3^{(*)} (u_4) \, A_1^{(1)} (u_3)
\, B_1^{(1)} (u_2) \, A_1^{(1)} (u_1) ] ( \xivac ).
\end{equation}

The last 3 of the 10 factors in the above product are creation operators, 
and they map $\xivac$ to $c_3 \zeta_3$, where $c_3 =1$ and 
$\zeta_3 = u_3 \otimes u_1 \otimes u_2$.  Let us examine 
what happens when the next 2 factors of the product (both of them doing 
annihilation) are applied.  We have:
\[
[ B_3^{(*)} (u_4) ] ( u_3 \otimes u_1 \otimes u_2 ) = 
\Bigl( \, q^2 \, \langle u_3, u_4 \rangle \, \Bigr) \, u_1 \otimes u_2,
\]
and then
\[
[ A_1^{(*)} (u_5) ] ( u_1 \otimes u_2 ) = 
\Bigl( \, \langle u_1, u_5 \rangle \, \Bigr) \, u_2;
\]
hence $\zeta_5 = u_2$ and 
$c_5 = \langle u_1, u_5 \rangle \ \langle u_3, u_4 \rangle \ q^2$.
Observe that this corresponds exactly (in the respect of what points are 
being paired in the rectangular picture, and also in the respect of ``how 
many points are skipped'' at every step, and from what direction) to the 
construction shown in Figure 6.  In particular, the equality 
$\zeta_5 = u_2$ corresponds to the fact that the point at height 2 is still 
not paired in Figure 6, while the other four points at heights $\leq 5$ are 
paired -- height 3 paired with height 4, then height 1 paired with height 5, 
corresponding to the two inner products multiplied in the formula for $c_5$.

The patient reader can verify that when we apply to $u_3$ the next 3 factors 
from the product (\ref{eqn:47xa}), we arrive to calculate
\[
[ B_2^{(*)} (u_8) ] ( u_7 \otimes u_2 \otimes u_6 )
= \Bigl( \, q \, \langle u_2 , u_8 \rangle \, \Bigr) \, u_7 \otimes u_6,
\]
and this corresponds exactly to performing the next step (choosing a pair 
for the point at height 8) in the construction which led to Figure 7.
Finally, upon applying the remaining 2 factors
$B_1^{(*)} (u_{10}) A_2^{(*)} (u_9)$ to $u_7 \otimes u_6$ we arrive to the 
vector $\eta_{10} = c_{10} \, \zeta_{10}$, where $\zeta_{10} = \xivac$ and 
\[
c_{10} = \langle u_7, u_{10} \rangle
\ \langle u_6, u_9 \rangle \ \langle u_2, u_8 \rangle 
\ \langle u_1, u_5 \rangle \ \langle u_3, u_4 \rangle\, q^4 \ ,
\]
exactly as claimed in Lemma \ref{lemma:46}.
\end{example}

\begin{remark}  \label{rem:48}
In the framework of Lemma \ref{lemma:46}, one has
\[
\prod_{  \begin{array}{c}
{\scriptstyle \{ k,h \} \ pair \ in \ \Ht_n \cdot \pi} \\ 
{\scriptstyle with \ \ee (h) = *, \ \ee (k) = 1}
\end{array} } \ q^{\xlambda_h - 1} 
= \prod_{h \in \ee^{-1} (*)} q^{\xlambda_h -1}
= q^{\Cr ( \pi )},
\]
where at the latter equality sign we invoked Proposition \ref{prop:310}.
Thus the scalar $c$ appearing in Equation (\ref{eqn:46xb}) of Lemma 
\ref{lemma:46} can also be written as
\begin{equation}   \label{eqn:48xa}
c = q^{\Cr ( \pi )} \cdot \prod_{  \begin{array}{c}
{\scriptstyle \{ k,h \} \ pair \ in \ \Ht_n \cdot \pi} \\ 
{\scriptstyle with \ \ee (h) = *, \ \ee (k) = 1}
\end{array} } \  \langle u_k, u_h \rangle .
\end{equation}
\end{remark}

The Wick type formula announced in the title of the subsection is
then stated as follows.

\begin{proposition}   \label{prop:49}
Let $M$ be the operator defined in Equation (\ref{eqn:42a}).  Then
\begin{equation}     \label{eqn:49xa}
M ( \xivac ) = \sum_{\pi \in \Phi_n^{-1} ( \ee )} 
\,  \Bigl[ \, q^{\emph{\Cr} ( \pi )} \cdot 
\prod_{  \begin{array}{c}
{\scriptstyle \{ k,h \} \ pair \ in \ \Ht_n \cdot \pi} \\ 
{\scriptstyle with \ \ee (h) = *, \ \ee (k) = 1}
\end{array} } 
\  \langle u_k, u_h \rangle \, \Bigr] \cdot \xivac .
\end{equation}
\end{proposition}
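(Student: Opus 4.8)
The plan is to assemble the statement directly from the three preceding results, Lemma~\ref{lemma:45}, Lemma~\ref{lemma:46} and Remark~\ref{rem:48}. First I would invoke Lemma~\ref{lemma:45} to expand $M(\xivac)$ as the sum, over all tuples $\xlambda = (\xlambda_1, \ldots , \xlambda_{2n}) \in \xLambda$, of the vectors appearing on the right-hand side of Equation~(\ref{eqn:45xc}), namely $[\,B_{\xlambda_{2n}}^{(\ee(2n))}(u_{2n}) \cdots A_{\xlambda_1}^{(\ee(1))}(u_1)\,](\xivac)$. Next, for each fixed $\xlambda \in \xLambda$, I would recall (Notation~\ref{def:44}) that the sub-tuple $(\xlambda_h)_{h \in \ee^{-1}(*)}$ encodes a unique pair-partition $\pi = \pi(\xlambda) \in \Phi_n^{-1}(\ee)$, in the way described in Remark~\ref{rem:38} and Example~\ref{example:39}, and that $\xlambda \mapsto \pi(\xlambda)$ is a bijection of $\xLambda$ onto $\Phi_n^{-1}(\ee)$. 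Lemma~\ref{lemma:46} then identifies the vector above as $c\,\xivac$, where $c$ is the product over the pairs $\{k,h\}$ of $\Ht_n \cdot \pi$ having $\ee(h) = *$ and $\ee(k) = 1$ of the scalars $\langle u_k, u_h \rangle \, q^{\xlambda_h - 1}$; and Remark~\ref{rem:48} (which pulls the powers of $q$ out of this product and uses Proposition~\ref{prop:310} to identify their total exponent as $\Cr(\pi)$) rewrites $c$ in the form~(\ref{eqn:48xa}).

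Finally I would substitute these evaluations back into the expansion of $M(\xivac)$ furnished by Lemma~\ref{lemma:45} and re-index the sum along the bijection $\xlambda \mapsto \pi(\xlambda)$: since every $\pi \in \Phi_n^{-1}(\ee)$ arises from exactly one $\xlambda \in \xLambda$, the sum over $\xLambda$ becomes a sum over $\pi \in \Phi_n^{-1}(\ee)$, with the coefficient of $\xivac$ attached to $\pi$ equal to $q^{\Cr(\pi)} \prod_{\{k,h\}} \langle u_k, u_h \rangle$, the product being over the pairs $\{k,h\}$ of $\Ht_n \cdot \pi$ with $\ee(h)=*$, $\ee(k)=1$. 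This is exactly the right-hand side of~(\ref{eqn:49xa}), which gives the claimed formula.

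Essentially all the substance of the argument has already been discharged in the three cited results, so the proof itself is a short assembly. The single point deserving attention is to confirm that the pair-partition $\pi$ produced inside the proof of Lemma~\ref{lemma:46} (through the procedure of adding and removing tensor components in the vectors $\zeta_h$) coincides with the pair-partition $\pi(\xlambda)$ coming from the combinatorial bijection $\xLambda \cong \Phi_n^{-1}(\ee)$ of Remark~\ref{rem:38} --- this is precisely the compatibility that was left ``as an exercise to the reader'' in that proof, and it is what makes the re-indexing legitimate. I expect this bookkeeping check to be the only mild obstacle; once it is in place, the formula~(\ref{eqn:49xa}) follows immediately.
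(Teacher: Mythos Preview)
Your proposal is correct and follows exactly the paper's own proof: combine Lemma~\ref{lemma:45} and Lemma~\ref{lemma:46} via the natural bijection between $\xLambda$ and $\Phi_n^{-1}(\ee)$, and use the rewriting (\ref{eqn:48xa}) from Remark~\ref{rem:48}. The ``exercise to the reader'' compatibility you flag is indeed the only subtle point, but it belongs to the proof of Lemma~\ref{lemma:46} rather than to this proposition, so here the assembly is immediate.
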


\begin{proof} This follows by combining Lemmas \ref{lemma:45} and 
\ref{lemma:46}, where we use the natural bijection between $\xLambda$
and $\Phi_n^{-1} ( \ee )$ and the formula (\ref{eqn:48xa}) noticed 
in Remark \ref{rem:48}.
\end{proof}

\begin{remark}    \label{rem:49}
The $q$-Wick formula is usually stated as saying that $\phivac (M)$ is 
equal to the scalar which amplifies $\xivac$ on the right-hand side of 
Equation (\ref{eqn:49xa}).

We note here that, if we focus on $\phivac (M)$ rather than on the vector 
$M( \xivac )$, it is easy to extend the two-sided $q$-Wick formula to the 
framework where the tuple $\ee$ would be picked in $\{ 1,* \}^{2n}$ without assuming 
(as we did throughout this subsection) that $\ee \in \cD_{(1,*)} (2n)$.
Indeed, if $\ee$ was to be picked from 
$\{ 1,* \}^{2n} \setminus \cD_{(1,*)} (2n)$, then it is immediate that 
the vector $M( \xivac )$ would come out orthogonal to $\xivac$, thus giving 
$\phivac (M) = 0$.
\end{remark}

$\ $

\subsection{Alternating monomials in 
\boldmath{$L_{i;q}^{\xsigma}$} and \boldmath{$R_{i;q}^{\xsigma}$}}

$\ $

\noindent
In this subsection we continue to keep fixed the $d \in \bN$, $q \in (-1,1)$,
$n \in \bN$ and $\ee \in \cD_{(1,*)} (2n)$ from the preceding subsection,
but not the vectors $u_1, \ldots , u_{2n} \in \bC^d$ that were used there.  
We will do some further processing of the formula obtained in Proposition 
\ref{prop:49}, in the special case when $u_1, \ldots , u_{2n}$ are picked from 
the standard orthonormal basis of $\bC^d$.

So for $1 \leq i \leq d$, let $e_i := (0, \ldots, 0,1,0, \ldots , 0)$ with the entry 
of $1$ on position $i$, and let us put, consistently with the notations used 
in the Introduction,
\[
L_{i;q} := L( e_i ), \ \ R_{i;q} := R( e_i ).
\]
Consider a $(2n)$-tuple $I : \{ 1, \ldots , 2n \} \to \{ 1, \ldots , d \}$ 
and let us examine the $q$-Wick formula from 
Proposition \ref{prop:49} in the case when $u_1 = e_{I(1)}, \ldots , u_{2n} = e_{I(2n)}$.
Each of the inner products $\langle u_k , u_h \rangle$ appearing in the formula is 
equal to $0$ or to $1$, and asking that all these inner products are equal to $1$ amounts
to a compatibility condition between $I$ and $\pi$, which we record in the following 
(ad-hoc) definition.

\begin{definition}   \label{def:410}
We will say that a $(2n)$-tuple $I : \{ 1, \ldots , 2n \} \to \{ 1, \ldots , d \}$ 
and a pair-partition $\pi \in \cP_2 (2n)$ are {\em height-compatible} to mean 
that the following condition is fulfilled:
\begin{equation}   \label{eqn:410xa}
I(h) = I(k), \ \mbox{ whenever $\{ h,k \}$ is a pair of $\h \cdot \pi$. }
\end{equation}
\end{definition}

\begin{remark}   \label{rem:411}
Note that Definition \ref{def:410} does not involve the tuple 
$\ee \in \cD_{(1,*)} (2n)$, it just prescribes a relation between 
$I$ and $\pi$.  If we fix a $\pi \in \cP_2 (2n)$, then height-compatibility 
amounts to a system of $n$ equations that have to be satisfied by 
a $(2n)$-tuple $I$ (it is clear, in particular, that there are precisely 
$d^n$ tuples $I$ which are height-compatible 
to 
\footnote{ But things are not so nice if, vice-versa, an $I$ is given and 
we count the pair-partitions $\pi \in \cP_2 (2n)$ which are height-compatible 
to it -- the answer can vary between ``$0$'' (e.g. when 
$I(1) = I(2) = \cdots = I(2n-1) \neq I(2n)$) and ``$(2n-1)!!$''
(obtained when $I(1) = I(2) = \cdots = I(2n)$).}
any 
given $\pi \in \cP_2 (2n)$).

For illustration, say for instance that $n=5$ and $\pi \in \cP_2 (10)$
is as depicted in Figure 4.  Then an 
$I : \{ 1, \ldots , 10 \} \to \{ 1, \ldots , d \}$ is height-compatible 
with this $\pi$ if and only if it satisfies 
\[
I(1) = I(4), \, I(2) = I(5), \, I(3) = I(8), \, I(6) = I(10),
\, I(7) = I(9).
\]
\end{remark}

When using the terminology from Definition \ref{def:410}, the special case of 
the two-sided $q$-Wick formula that we are interested in takes the following
form.

\begin{corollary}  \label{cor:412}
Consider the data $d \in \bN$, $q\in (-1,1)$, $n \in \bN$ and 
$\ee \in \cD_{(1,*)} (2n)$ fixed in this subsection.  For every 
$(2n)$-tuple $I : \{ 1, \ldots , 2n \} \to \{1, \ldots , d \}$
we have
\begin{equation}   \label{eqn:412xa}
\phivac \Bigl( \,  R_{I(2n);q}^{\ee (2n)} L_{I(2n-1);q}^{\ee (2n-1)} \cdots
R_{I(2);q}^{\ee (2)}  L_{I(1);q}^{\ee (1)} \, \Bigr) 
= \sum_{\substack{ \pi\in \Phi_n^{-1}(\ee),  \\ 
\text{height-compatible with}\,I}} 
q^{\emph{\Cr} (\pi)} .
\end{equation}
(The index set in the summation on the right-hand side of 
(\ref{eqn:412xa}) may be empty, in which case the corresponding 
sum is equal to $0$.)
\end{corollary}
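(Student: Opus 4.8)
The plan is to deduce Corollary \ref{cor:412} as an immediate specialization of the two-sided $q$-Wick formula of Proposition \ref{prop:49}. Concretely, I would invoke that proposition for the operator $M$ of Equation (\ref{eqn:42a}) with the particular choice of vectors $u_1 = e_{I(1)}, \ldots, u_{2n} = e_{I(2n)}$ from the standard orthonormal basis of $\bC^d$. Since $L_{i;q} = L(e_i)$ and $R_{i;q} = R(e_i)$ (so that also $L_{i;q}^{*} = L^{*}(e_i)$ and $R_{i;q}^{*} = R^{*}(e_i)$), this particular $M$ is exactly the alternating monomial $R_{I(2n);q}^{\ee(2n)} L_{I(2n-1);q}^{\ee(2n-1)} \cdots R_{I(2);q}^{\ee(2)} L_{I(1);q}^{\ee(1)}$ that appears inside $\phivac( \cdot )$ on the left-hand side of (\ref{eqn:412xa}).

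Next I would apply Proposition \ref{prop:49} to write $M(\xivac)$ as a scalar multiple of $\xivac$, and then take $\phivac$ of both sides. Because $\phivac(X) = \langle X \xivac, \xivac \rangle_q$ and $\langle \xivac, \xivac \rangle_q = 1$, the left-hand side of (\ref{eqn:412xa}) equals precisely the scalar amplifying $\xivac$ in the right-hand side of (\ref{eqn:49xa}); that is,
\[
\phivac( M ) = \sum_{\pi \in \Phi_n^{-1}(\ee)} q^{\Cr(\pi)} \prod_{\substack{ \{k,h\}\text{ pair in }\Ht_n\cdot\pi \\ \text{with }\ee(h) = *,\ \ee(k) = 1 }} \langle e_{I(k)}, e_{I(h)} \rangle .
\]
Here each factor $\langle e_{I(k)}, e_{I(h)} \rangle$ equals $\delta_{I(k), I(h)} \in \{0,1\}$, so every summand of the above sum is either $q^{\Cr(\pi)}$ (when all its factors equal $1$) or $0$ (when at least one factor vanishes).

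The one point deserving care — and essentially the only genuine content of the argument — is matching the condition ``all factors equal $1$'' with the notion of height-compatibility from Definition \ref{def:410}. For $\pi \in \Phi_n^{-1}(\ee)$, the construction of $\Phi_n$ in Notation \ref{def:36} assigns the decoration ``$1$'' to $\min(W)$ and ``$*$'' to $\max(W)$ for every block $W$ of $\Ht_n \cdot \pi$; hence each pair $\{k,h\}$ of $\Ht_n\cdot\pi$ (say with $k < h$, i.e. $k$ at the smaller height) automatically satisfies $\ee(k) = 1$ and $\ee(h) = *$, so the product above is in fact indexed by all $n$ pairs of $\Ht_n \cdot \pi$. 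Consequently the product equals $1$ exactly when $I(k) = I(h)$ for every pair $\{k,h\}$ of $\Ht_n \cdot \pi$, which is precisely the defining condition (\ref{eqn:410xa}) for $I$ to be height-compatible with $\pi$, and equals $0$ otherwise. Substituting this $0/1$ indicator back into the sum collapses it to $\sum_{\pi} q^{\Cr(\pi)}$ taken over those $\pi \in \Phi_n^{-1}(\ee)$ that are height-compatible with $I$, which is the right-hand side of (\ref{eqn:412xa}); if no such $\pi$ exists, the sum is empty and equals $0$, as recorded in the statement.
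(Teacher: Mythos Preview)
Your proof is correct and follows essentially the same approach as the paper's: specialize Proposition~\ref{prop:49} to $u_h = e_{I(h)}$, reduce the inner products to Kronecker deltas, and identify the resulting $0/1$ condition with height-compatibility. Your write-up is in fact more explicit than the paper's (which simply calls this ``an obvious consequence of Proposition~\ref{prop:49} and of how the definition of height-compatibility was made''), particularly in spelling out why the product ranges over \emph{all} pairs of $\Ht_n\cdot\pi$.
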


\begin{proof}  As an obvious consequence of Proposition \ref{prop:49}
and of how the definition of height-compatibility was made, we get
\begin{equation}   \label{eqn:412xb}
[ R_{I(2n);q}^{\ee (2n)} L_{I(2n-1);q}^{\ee (2n-1)} \cdots
R_{I(2);q}^{\ee (2)}  L_{I(1);q}^{\ee (1)} ] ( \xivac ) 
 = \Bigl[ \sum_{\substack{ \pi\in \Phi_n^{-1}(\ee),  \\ 
\text{height-compatible with}\,I}} 
q^{\Cr (\pi)} \Bigr] \cdot \xivac ,
\end{equation}
which in turn implies the formula stated in the corollary.
\end{proof}

$\ $

\subsection{Proof of Theorem 1.4.}
 
$\ $

\noindent
In this subsection we fix $d \in \bN$ and $q \in (-1,1)$, 
and we will use the various notations pertaining to this 
$d$ and $q$ that were considered in Sections 3.1-3.3. 
Our goal is to prove Theorem 1.4 from the Introduction.

The theorem refers to the polynomials 
$\widetilde{\xR}_n (t,u)$, which were introduced in Notation 1.3.  
Since the definition of $\widetilde{\xR}_n$ relies on the concepts of
``number of crossings'' and of ``number of closed curves'' for a 
self-intersecting meandric system, we start by indicating the precise 
formulas for these numbers.  So let $n$ be a positive integer, let 
$\pi , \sigma$ be in $\cP_2 (2n)$, and consider the self-intersecting 
meandric system which is obtained by drawing $\pi$ above a horizontal 
line and $\sigma$ under it (as in Figure 2 in the Introduction).  Then:

\vspace{6pt}

\noindent
$\bullet$ the {\em number of crossings} of the said self-intersecting 
meandric system is $\Cr ( \pi ) + \Cr ( \sigma )$, and

\vspace{6pt}

\noindent
$\bullet$ its {\em number of closed curves} is defined as 
$| \pi \vee \sigma |$ (in words: the number of blocks of the join 
of $\pi$ and $\sigma$ in the lattice $\cP (2n)$), where the join 
operation ``$\vee$'' and a few other useful terms related to partitions
are reviewed in the next definition.

\vspace{6pt}

\begin{definition-and-remark}   \label{def:413}
Let $m$ be a positive integer.

\vspace{6pt}

(1) We denote by $\cP(m)$ the set of all {\em partitions} of 
$\{ 1, \ldots , m \}$.  A $\pi \in \cP(m)$ is thus of the form 
$\pi=\{ V_1, \ldots, V_k \}$, where the sets $V_1, \ldots, V_k$ (called 
{\em blocks} of $\pi$) are non-empty sets such that 
$\cup_{i=1}^k V_i = \{ 1, \ldots , m \}$ and
$V_i \cap V_j = \emptyset \mbox{ for } i \neq j$.

\vspace{6pt}

(2) Let $\pi$ be in $\cP (m)$.  We will use the notation 
$| \pi |$ for the number of blocks of $\pi$.
We will also use the notation ``$\ecpi$'' 
for the equivalence relation determined by $\pi$ on $\{ 1, \ldots , m \}$; 
that is, for $a,b \in \{ 1, \ldots , m \}$ we will write $a \ecpi b$ to 
mean that $a$ and $b$ belong to the same block of $\pi$. 

\vspace{6pt}

(3) On $\cP (m)$ we will use the partial order given by 
{\em reverse refinement}, where $\pi \leq \pi '$ means by 
definition that every block of $\pi '$ can be written as a 
union of blocks of $\pi$.  

\vspace{6pt}

(4) Any two partitions $\pi , \sigma \in \cP (m)$ have a least 
common upper bound, denoted as $\pi \vee \sigma$, with respect to 
the reverse refinement order.  The partition
$\pi \vee \sigma$ is called the {\em join} of $\pi$ and $\sigma$.  
It is easily verified that its blocks can be explicitly described 
as follows: two numbers $a,b \in \{ 1, \ldots , m \}$ belong to 
the same block of $\pi \vee \sigma$ if and only if there exist 
$k \in \bN$ and 
$a_0, a_1, \ldots , a_{2k} \in \{ 1, \ldots , m \}$ such that 
$a = a_0 \ecpi a_1 \ecsigma a_2 \ecpi \cdots \ecpi a_{2k-1}
\ecsigma a_{2k} = b$.

\vspace{6pt}

(5) Let $s$ be a permutation of $\{ 1, \ldots , m \}$, and consider the 
natural action of $s$ on $\cP (m)$ (if $\pi = \{ V_1, \ldots , V_k \}$,
then $s \cdot \pi = \{ s(V_1), \ldots , s(V_k) \}$.  This action 
respects the join operation, that is:
\begin{equation}   \label{eqn:413b}
s \cdot ( \pi \vee \sigma ) = 
( s \cdot \pi ) \vee ( s \cdot \sigma ),
\ \ \forall \, \pi , \sigma \in \cP (m).
\end{equation} 
\end{definition-and-remark}

\begin{remark}   \label{rem:414}
Let $n$ be in $\bN$ and consider the set of self-intersecting meandric 
systems $\widetilde{\cR}_n$ from Notation 1.3.  Every such system is 
parametrized by a $\pi \in \cP_2 (2n)$ (namely the $\pi$ which appears, in 
the drawing of the system, above the horizontal line), and has:

$\bullet$ number of crossings equal to $\Cr ( \pi )$ (since the rainbow 
pair-partition has $\Cr ( \rho_{2n} ) = 0$);

$\bullet$ number of closed curves equal to $| \pi \vee \rho_{2n} |$.

\noindent
It is immediately seen that, upon plugging these numbers into the Equation 
(\ref{eqn:intro8}) which was used to define the polynomial 
$\widetilde{\xR}_n (t,u)$ in Notation 1.3, one comes to the formula 
\begin{equation}   \label{eqn:414xa}
\widetilde{\xR}_n (t,u) = \sum_{\pi \in \cP_2 (2n)}
t^{|\pi \vee \rho_{2n} |} \, u^{\Cr (\pi)} .
\end{equation}
\end{remark}

\vspace{6pt}

We now proceed to the actual proof of Theorem 1.4.  We will use the 
following lemma.

\begin{lemma}   \label{lemma:415}
Let $n$ be in $\bN$ and let $\pi$ be a pair-partition in $\cP_2 (2n)$.
Consider the set of $(2n)$-tuples
\begin{equation}   \label{eqn:415a}
\cI_{\pi} :=
\Bigl\{ I : \{ 1, \ldots , 2n \} \to \{ 1, \ldots , d \}
\begin{array}{ll}
\vline  &
I(2k-1) = I(2k), \ \ \forall \, 1 \leq k \leq n,  \\
\vline  &
\mbox{and $I$ is height-compatible with $\pi$}
\end{array}   \Bigr\} ,
\end{equation}
where the notion of ``height-compatible'' is as in 
Definition \ref{def:410}.  Then
$\mid \cI_{\pi} \mid = d^{ | \pi \vee \rho_{2n} | }$.
\end{lemma}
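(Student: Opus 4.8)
The plan is to interpret each of the two conditions defining $\cI_{\pi}$ as the requirement that $I$ be constant on the blocks of a certain pair-partition of $\{1,\dots,2n\}$, and then to merge the two conditions using the join operation in the lattice $\cP(2n)$. First I would note that the requirement $I(2k-1)=I(2k)$ for all $1\le k\le n$ says precisely that $I$ is constant on each block of the interval pair-partition $\{\{1,2\},\{3,4\},\dots,\{2n-1,2n\}\}$, which by Equation (\ref{eqn:33b}) is $\Ht_n\cdot\rho_{2n}$. Likewise, by Definition \ref{def:410}, the height-compatibility of $I$ with $\pi$ says exactly that $I$ is constant on each block of $\Ht_n\cdot\pi$. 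Thus $I\in\cI_{\pi}$ if and only if $I$ is simultaneously constant on the blocks of $\Ht_n\cdot\rho_{2n}$ and on the blocks of $\Ht_n\cdot\pi$.

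Next I would invoke the defining property of the join as the least common coarsening (together with the explicit description of its blocks in Definition and Remark \ref{def:413}(4)): a map on $\{1,\dots,2n\}$ is constant on the blocks of both $\Ht_n\cdot\rho_{2n}$ and $\Ht_n\cdot\pi$ if and only if it is constant on the blocks of their join $(\Ht_n\cdot\rho_{2n})\vee(\Ht_n\cdot\pi)$, the join being taken in $\cP(2n)$. Since a map $\{1,\dots,2n\}\to\{1,\dots,d\}$ constant on the blocks of a partition $\eta$ is the same thing as an arbitrary map from the block set of $\eta$ into $\{1,\dots,d\}$, there are exactly $d^{|\eta|}$ such maps; applying this with $\eta=(\Ht_n\cdot\rho_{2n})\vee(\Ht_n\cdot\pi)$ gives $|\cI_{\pi}|=d^{\,|(\Ht_n\cdot\rho_{2n})\vee(\Ht_n\cdot\pi)|}$.

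Finally I would simplify the exponent: by Equation (\ref{eqn:413b}), the natural action of a permutation on $\cP(2n)$ commutes with joins, so applying this to $\Ht_n$ yields $(\Ht_n\cdot\rho_{2n})\vee(\Ht_n\cdot\pi)=\Ht_n\cdot(\rho_{2n}\vee\pi)$; and since a permutation does not change the number of blocks of a partition, this partition has $|\rho_{2n}\vee\pi|=|\pi\vee\rho_{2n}|$ blocks, giving the claimed formula $|\cI_{\pi}|=d^{\,|\pi\vee\rho_{2n}|}$. I do not anticipate a real obstacle here; the only point needing a little care is the bookkeeping in the equivalence ``constant on both partitions $\iff$ constant on their join'', in particular making sure that it is the join in the full partition lattice $\cP(2n)$ that appears (not some join within the smaller poset of pair-partitions) — but Equation (\ref{eqn:413b}) and the block description in Definition and Remark \ref{def:413}(4) settle this directly, and the remaining bookkeeping with permutations is routine.
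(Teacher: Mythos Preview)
Your proposal is correct and follows essentially the same argument as the paper: both translate the two constraints on $I$ into the requirement that $I$ be constant on the blocks of $\Ht_n\cdot\rho_{2n}$ and of $\Ht_n\cdot\pi$ (the paper phrases this via $\Ker(I)\geq\cdot$ in the reverse refinement order), combine them using the join in $\cP(2n)$, and then invoke Equations~(\ref{eqn:33b}) and~(\ref{eqn:413b}) together with the fact that permutations preserve block counts. The only difference is cosmetic ordering of the steps.
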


\begin{proof} The first requirement imposed on $I$ on the right-hand 
side of (\ref{eqn:415a}) can be expressed as an inequality with respect 
to the reverse refinement order on $\cP (2n)$, namely
\[
Ker (I) \geq \bigl\{ \, \{ 1,2 \}, \ldots , \{ 2n-1, 2n \} \, \bigr\} ,
\]
where $\Ker (I)$ denotes the partition of $\{ 1, \ldots , 2n \}$ into 
level-sets of $I$ (two numbers $a,b \in \{ 1, \ldots , 2n \}$ belong to 
the same block of $\Ker (I)$ if and only if $I(a) = I(b)$).  Upon reviewing
the definition of height-compatibility, we see that the second requirement 
imposed on $I$ in (\ref{eqn:415a}) amounts to the inequality 
$Ker (I) \geq \h \cdot \pi$.  But then, in view of how the join operation is
defined on $\cP (2n)$, we find that the set $\cI_{\pi}$ defined in 
(\ref{eqn:415a}) can be written as
\[
\cI_{\pi} = 
\Bigl\{ I : \{ 1, \ldots , 2n \} \to \{ 1, \ldots , d \} \mid 
Ker (I) \geq \bigl\{ \, \{ 1,2 \}, \ldots , \{ 2n-1, 2n \} \, \bigr\} 
\vee \h ( \pi ) \Bigr\} .
\]
This can be continued with
\[
\begin{array}{ll}
= & 
\bigl\{ I : \{ 1, \ldots , 2n \} \to \{ 1, \ldots , d \} \mid 
Ker (I) \geq ( \h \cdot \rho_{2n} ) \vee ( \h \cdot \pi ) \bigr\}                  \\
  & \mbox{ (by Equation (\ref{eqn:33b}) in Notation \ref{def:33}) }    \\
= &
\bigl\{ I : \{ 1, \ldots , 2n \} \to \{ 1, \ldots , d \} \mid 
Ker (I) \geq \h \cdot ( \rho_{2n} \vee \pi ) \bigr\}                     
\ \ \mbox{ (by Eqn.(\ref{eqn:413b})). } 
\end{array}
\]
In the latter formulation, it becomes clear that all the $(2n)$-tuples 
$I \in \cI_{\pi}$ are obtained, without repetitions, when we prescribe
at will one value $b \in \{ 1, \ldots , d \}$ for every block of 
$\h ( \rho_{2n} \vee \pi )$, and assign $I$ to be identically equal to $b$ 
on that block.  This implies 
$| \cI_{\pi} |  = d^{ | \h \cdot ( \rho_{2n} \vee \pi ) | }$, and since
$| \h \cdot ( \rho_{2n} \vee \pi ) | = | \rho_{2n} \vee \pi |$, the required 
formula for $| \cI_{\pi} |$ follows.
\end{proof}

\vspace{10pt}

{\em Proof of Theorem 1.4.}  The operator $T_{d;q}$ is selfadjoint by Corollary 
\ref{cor:42}.  For the calculation of moments, we must prove (by taking into account 
the above formula Equation (\ref{eqn:414xa})) that
\begin{equation}   \label{eqn:413yz}
\phivac ( T_{d;q}^n ) = \sum_{\pi \in \cP_2 (2n)}
d^{|\pi \vee \rho_{2n} |} \, q^{\Cr (\pi)} ,
\ \ \forall \, n \in \bN .
\end{equation}
For the remaining part of the proof we fix an $n \in \bN$ for which we 
will verify that (\ref{eqn:413yz}) holds.

By taking the power $n$ of the sum which defines $T_{d;q}$, we get
\[
T_{d;q}^n = 
\Bigl( \, \sum_{i=1}^d (L_{i;q}+L_{i;q}^*)(R_{i;q}+R_{i;q}^*) \, \Bigr)^n =
\Bigl( \, \sum_{i=1}^d (R_{i;q}+R_{i;q}^*)(L_{i;q}+L_{i;q}^*) \, \Bigr)^n
\]
\begin{equation}   \label{eqn:413ya}
= 
\sum_{\substack{ I : \{ 1, \ldots , 2n \} \to \{ 1, \ldots , d \}      \\ 
                   \text{with}\,I(2k-1) = I(2k), \, 1 \leq k \leq n } } 
\,  \sum_{\ee: \{ 1, \ldots , 2n \} \to \{ 1,* \} }
\,  R_{I(2n);q}^{\ee (2n)} L_{I(2n-1);q}^{\ee (2n-1)} \cdots
R_{I(2);q}^{\ee (2)}  L_{I(1);q}^{\ee (1)} .
\end{equation}
Upon applying $\phivac$ to (\ref{eqn:413ya}) and then invoking an 
orthogonality observation analogous to the one in the last paragraph of 
Remark \ref{rem:49}, we find that 
\begin{equation}   \label{eqn:413yb}
\phivac ( T_{d;q}^n ) = 
\sum_{\substack{ I : \{ 1, \ldots , 2n \} \to \{ 1, \ldots , d \}      \\ 
                   \text{with}\,I(2k-1) = I(2k), \, 1 \leq k \leq n } } 
\Bigl[ \,  \sum_{\ee \in \cD_{(1,*)} (2n)}
\phivac( \,  R_{I(2n);q}^{\ee (2n)} L_{I(2n-1);q}^{\ee (2n-1)} \cdots
R_{I(2);q}^{\ee (2)}  L_{I(1);q}^{\ee (1)} \, ) \, \Bigr] .
\end{equation}

We next note that for any fixed 
$I : \{ 1, \ldots , 2n \} \to \{ 1, \ldots , d \}$ we can write:
\[
\sum_{\ee \in \cD_{(1,*)} (2n)}
\phivac( \, R_{I(2n);q}^{\ee (2n)} L_{I(2n-1);q}^{\ee (2n-1)} \cdots
R_{I(2);q}^{\ee (2)}  L_{I(1);q}^{\ee (1)} \, )
\]
\[
= \sum_{\ee \in \cD_{(1,*)} (2n)} 
\Bigl[ \, 
\sum_{\substack{ \pi \in \Phi_n^{-1} ( \ee ),           \\ 
                 \text{height-compatible with}\, I} } 
q^{\Cr ( \pi )} \, \Bigr]
\ \ \mbox{ (by Corollary \ref{cor:412})}
\]
\[
= \sum_{\substack{ \pi \in \cP_2 (2n),            \\ 
                     \text{height-compatible with}\, I} } 
\ q^{\Cr ( \pi )} ,
\]
where at the latter equality sign we took into account that 
$\cup_{\ee \in \cD_{(1,*)} (2n)} \Phi_n^{-1} ( \ee ) = \cP_2 (2n)$,
disjoint union.

When the result of the calculation from the preceding paragraph is 
plugged into Equation (\ref{eqn:413yb}), we find that
\begin{equation}   \label{eqn:413yc}
\phivac ( T_{d;q}^n ) = 
\sum_{\substack{ I : \{ 1, \ldots , 2n \} \to \{ 1, \ldots , d \}      \\ 
                   \text{with}\,I(2k-1) = I(2k), \, 1 \leq k \leq n } } 
\Bigl[ \, \sum_{\substack{ \pi \in \cP_2 (2n),            \\ 
                     \text{height-compatible with}\, I} } 
q^{\Cr ( \pi )} \, \Bigr] .
\end{equation}

Finally, we change the order of summation on the right-hand side of Equation 
(\ref{eqn:413yc}).  This leads to
\[
\phivac ( T_{d;q}^n ) 
= \sum_{\pi \in \cP_2 (2n)} q^{\Cr ( \pi )} \cdot 
| \, \cI_{\pi} \, |,
\]
where the set $\cI_{\pi} \subseteq \{ 1, \ldots , d \}^{2n}$ is precisely the
one considered in Lemma \ref{lemma:415}.  The required formula for 
$\phivac ( T_{d;q}^n )$ follows, since Lemma \ref{lemma:415} has 
established that $| \, \cI_{\pi} \, | = d^{ | \pi \vee \rho_{2n} | }$.
\hfill $\blacksquare$

$\ $

\section{Some miscellaneous remarks}
\setcounter{equation}{0}

\subsection{A more general two-sided \boldmath{$q$}-Wick formula}

$\ $

\noindent
The derivation of two-sided $q$-Wick formula in Section 3.2 focused 
on the case needed in our main theorem, where the considered product of 
operators of the form $L^{\xsigma} (u)$, $R^{\xsigma} (u)$ has $2n$ 
alternating factors ``$L$'' and ``$R$''.  We will outline here how this 
formula generalizes to products that are not necessarily alternating.

So let us consider again the data that was fixed throughout Section 3.2:
$d, n \in \bN$, $q \in (-1,1)$, a tuple 
$\ee = ( \ee (1), \ldots , \ee (2n) ) \in \cD_{(1,*)} (2n)$ and some
vectors $u_1, \ldots , u_{2n} \in \bC^d$.  In addition to that, we now 
fix a tuple
\begin{equation}   \label{eqn:51a}
\chi = ( \chi (1), \ldots , \chi (2n) ) \in \{ \ell , r \}^{2n},
\end{equation}
and we look at the operator
\begin{equation}   \label{eqn:51b}
M := S_{\chi (2n)}^{\ee (2n)} (u_{2n}) \cdots 
S_{\chi (1)}^{\ee (1)} (u_1)  \in B( \cT_{d;q} ),
\end{equation}
where for $u \in \cT_{d;q}$ we denote (with ``$S$'' as a reminder of 
``shift operator''):
\[
L(u) =: S_{\ell} (u), \ \  R(u) =: S_{r} (u). 
\]
Same as in the special case discussed in Section 3.2 (which corresponds to 
putting $\chi = ( \ell, r, \ell, r, \ldots , \ell, r )$), we have that the 
vacuum-vector $\xivac$ is an eigenvector for $M$, and the Wick formula
expresses the corresponding eigenvalue as a summation over $\cP_2 (2n)$.
When writing the derivation of this formula, it turns out that the only 
thing that needs to be changed (in the considerations leading to Proposition
\ref{prop:49} from Section 3.2) is the definition of the ``labels-to-heights'' 
permutation $\Ht_n$ introduced in Notation \ref{def:33}.  What is needed in 
the place of $\Ht_n$ is a permutation ``$\Ht_{\chi}$'' (first noticed in 
\cite{MN2015}, and then put to intensive use in \cite{CNS2015, CNS2015b}) 
which plays an essential role in the combinatorics of two-faced free probability.

\begin{notation}   \label{def:51}
Let $\chi$ be as in (\ref{eqn:51a}), and let us write explicitly:
\[
\left\{ 
\begin{array}{ll}
\chi^{-1} ( \ell ) = \{ i_1, \ldots , i_p \}, 
& \mbox{ with $i_1 < \cdots < i_p$, and}           \\
\chi^{-1} (r) = \{ j_1, \ldots , j_q \}, 
& \mbox{ with $j_1 < \cdots < j_q$, where $q = n-p$. }    
\end{array}  \right.
\]
We denote by $\Ht_{\chi}$ the permutation of $\{ 1, \ldots , 2n \}$ 
defined by
\begin{equation}   \label{eqn:51xa}
\Ht_{\chi} := \left(  \begin{array}{cccccccc}
1   & 2    & \cdots & p   & p+1 & \cdots & 2n-1  & 2n     \\
i_1 & i_2  & \cdots & i_p & j_q & \cdots & j_2   & j_1 
\end{array}  \right) .
\end{equation}
\end{notation}

\begin{example}   \label{example:52}
Suppose that $n=5$ and that 
$\chi = ( r, \ell, \ell, r, \ell, \ell, r,r, \ell, \ell )
\in \{ \ell , r \}^{10}$.  Thus  
$\chi^{-1}(\ell)=\{2, 3, 5, 6, 9, 10 \}$,
$\chi^{-1}(r)=\{1,4,7,8 \}$ and the labels-to-heights permutation
$\Ht_{\chi}$ comes out as 
\[
\Ht_{\chi} := \left(  \begin{array}{cccccccccc}
1 & 2 & 3 & 4 & 5 & 6  & 7 & 8 & 9 & 10     \\
2 & 3 & 5 & 6 & 9 & 10 & 8 & 7 & 4 & 1 
\end{array}  \right) .
\]
When working with this $\chi$, the pictures showing 10 points around 
a rectangle which were used for illustration in Section 2 must now be
adjusted as shown in Figure 8.

\begin{center}
\[
\begin{tikzpicture}[baseline]
		\draw[thick, rline] (-2,0.25) -- (-2, -4.5);
		\draw[thick, dashed](-2,-4.5) -- (2,-4.5) ;
		\draw[thick,rline](2,-4.5)-- (2,0.25);
		\draw[thick,dashed](2,0.25)--(-2,0.25);

		\foreach\x in {1,2}
		{
					\pgfmathtruncatemacro{\nodename}{\x}
		  \node[left]at (-2.2,-0.4*\x-0.4) {$P{_\x}$};
		  \node(ball\nodename)[draw, circle, inner sep=0.07cm] at (-2,-0.4*\x-0.4) {}; 
		};
		\pgfmathtruncatemacro{\nodename}{3}
		  \node[left]at (-2.2, -1.6-0.4*1) {$P{_3}$};
		  \node(ball\nodename)[draw, circle, inner sep=0.07cm] at (-2,-1.6-0.4*1) {}; 
		  		\pgfmathtruncatemacro{\nodename}{4}
		  \node[left]at (-2.2, -1.6-0.4*2) {$P{_4}$};
		  \node(ball\nodename)[draw, circle, inner sep=0.07cm] at (-2,-1.6-0.4*2) {}; 
		  		\pgfmathtruncatemacro{\nodename}{5}
		  \node[left]at (-2.2, -3.6) {$P{_5}$};
		  \node(ball\nodename)[draw, circle, inner sep=0.07cm] at (-2,-3.6) {}; 
		  		  		\pgfmathtruncatemacro{\nodename}{6}
		  \node[left]at (-2.2, -4) {$P{_6}$};
		  \node(ball\nodename)[draw, circle, inner sep=0.07cm] at (-2,-4) {}; 
		  
		  		  		  		\pgfmathtruncatemacro{\nodename}{7}
		  \node[left]at (2.9, -3.2) {$P{_7}$};
		  \node(ball\nodename)[draw, circle, inner sep=0.07cm] at (2,-3.2) {}; 
		  
		  		  		  		\pgfmathtruncatemacro{\nodename}{8}
		  \node[left]at (2.9, -2.8) {$P{_8}$};
		  \node(ball\nodename)[draw, circle, inner sep=0.07cm] at (2,-2.8) {}; 
		  		  		  		  		\pgfmathtruncatemacro{\nodename}{9}
		  \node[left]at (2.9, -1.6) {$P{_9}$};
		  \node(ball\nodename)[draw, circle, inner sep=0.07cm] at (2,-1.6) {}; 
		  
		  		  		  		\pgfmathtruncatemacro{\nodename}{10}
		  \node[left]at (3.1, -1.2+0.8) {$P{_{10}}$};
		  \node(ball\nodename)[draw, circle, inner sep=0.07cm] at (2,-1.2+0.8) {};

\end{tikzpicture}
\]
{\bf Figure 8.}  {\em Repositioning of the points $P_1, \ldots , P_{10}$, in 
order to}
 
{\em work with $\chi = ( r, \ell, \ell, r, \ell, \ell, r,r, \ell, \ell )
\in \{ \ell , r \}^{10}$.  } 
\end{center}

\end{example}

\begin{remark}    \label{rem:52}
Upon re-examining the Section 2, one sees that all the constructions and arguments 
presented there were based on the permutation $\Ht_n$ introduced in Notation 
\ref{def:33}, and can be adjusted word-by-word to the framework where we use 
our arbitrary (but fixed) tuple $\chi \in \{ \ell , r \}^{2n}$ by simply replacing 
everywhere ``$\Ht_n$'' by ``$\Ht_{\chi}$''.  First, we get a bijection
\[
\Phi_{\chi} : \cP_2 (2n) \to \cD_{(1,*)} (2n),
\]
defined exactly as in Notation \ref{def:36}.  Then the whole discussion from
Section 2.3 goes through to the $\chi$-framework, and concludes with the analogue 
of Proposition \ref{prop:310}, expressing the number of crossings of a partition 
$\pi \in \Phi_{\chi}^{-1} ( \ee )$ in terms of the tuple of choice-numbers which 
encodes that partition.  A concrete example of how this goes 
is shown in Figure 9.

\begin{center}
\[
\begin{tikzpicture}[baseline]
		\draw[thick, rline] (-2,0.25) -- (-2, -4.5);
		\draw[thick, dashed](-2,-4.5) -- (2,-4.5) ;
		\draw[thick,rline](2,-4.5)-- (2,0.25);
		\draw[thick,dashed](2,0.25)--(-2,0.25);

		\foreach\x in {1,2}
		{
					\pgfmathtruncatemacro{\nodename}{\x}
		  \node[left]at (-2.2,-0.4*\x-0.4) {$P{_\x}$};
		  \node(ball\nodename)[draw, circle, inner sep=0.07cm] at (-2,-0.4*\x-0.4) {}; 
		};
		\pgfmathtruncatemacro{\nodename}{3}
		  \node[left]at (-2.2, -1.6-0.4*1) {$P{_3}$};
		  \node(ball\nodename)[draw, circle, inner sep=0.07cm] at (-2,-1.6-0.4*1) {}; 
		  		\pgfmathtruncatemacro{\nodename}{4}
		  \node[left]at (-2.2, -1.6-0.4*2) {$P{_4}$};
		  \node(ball\nodename)[draw, circle, inner sep=0.07cm] at (-2,-1.6-0.4*2) {}; 
		  		\pgfmathtruncatemacro{\nodename}{5}
		  \node[left]at (-2.2, -3.6) {$P{_5}$};
		  \node(ball\nodename)[draw, circle, inner sep=0.07cm] at (-2,-3.6) {}; 
		  		  		\pgfmathtruncatemacro{\nodename}{6}
		  \node[left]at (-2.2, -4) {$P{_6}$};
		  \node(ball\nodename)[draw, circle, inner sep=0.07cm] at (-2,-4) {}; 
		  
		  		  		  		\pgfmathtruncatemacro{\nodename}{7}
		  \node[left]at (2.9, -3.2) {$P{_7}$};
		  \node(ball\nodename)[draw, circle, inner sep=0.07cm] at (2,-3.2) {}; 
		  
		  		  		  		\pgfmathtruncatemacro{\nodename}{8}
		  \node[left]at (2.9, -2.8) {$P{_8}$};
		  \node(ball\nodename)[draw, circle, inner sep=0.07cm] at (2,-2.8) {}; 
		  		  		  		  		\pgfmathtruncatemacro{\nodename}{9}
		  \node[left]at (2.9, -1.6) {$P{_9}$};
		  \node(ball\nodename)[draw, circle, inner sep=0.07cm] at (2,-1.6) {}; 
		  
		  		  		  		\pgfmathtruncatemacro{\nodename}{10}
		  \node[left]at (3.1, -1.2+0.8) {$P{_{10}}$};
		  \node(ball\nodename)[draw, circle, inner sep=0.07cm] at (2,-1.2+0.8) {};

		  \node [left] at (-2.2-1, -1.2){deco.$=1$};
		  \node [left] at (-2.2-1, -0.8){deco.$=1$};
		  \node [left] at (-2.2-1, -2){deco.$=1$};
		  \node [left] at (-2.2-1, -2.4){deco.$=*$};
		  \node [left] at (-2.2-1, -3.6){deco.$=*$};
		  \node [left] at (-2.2-1, -4){deco.$=*$};
		  
		  \node[left]at (2.7+0.2+2,-3.2){deco.$=1$};
		  \node[left]at (2.7+0.2+2,-2.8){deco.$=*$};
		  \node[left]at (2.7+0.2+2,-1.6){deco.$=*$};
		  		  \node[left]at (2.7+0.2+2,-1.2+0.8){deco.$=1$};

		  \draw[rline,thick](ball2)--++(1.3,0)--++(0,-1.2)--(ball4);
		  
		  		  \draw[rline,thick](ball3)--++(0.7,0)--++(0,-2)--(ball6);
				  \draw[rline, thick](ball5)--++(2,0)--++(0,0.4)--(ball7);
				   \draw[rline, thick](ball8)--++(-0.7,0)--++(0,2.4)--(ball10);
				    \draw[rline, thick](ball1)--++(2,0)--++(0,-0.8)--(ball9);

\end{tikzpicture}
\]
{\bf Figure 9.}  {\em Suppose that $n=5$, 
$\chi =( r, \ell, \ell, r, \ell, \ell, r, r, \ell, \ell )$, 
$\ee =(1, 1, 1, *, 1, *, *, 1, *, *)$.}

{\em The pair-partition 
$\pi = \{  \{1,9\}, \{2,4 \}, \{ 3,6\}, \{5,7 \}, \{8,10 \}\} \in \Phi_\chi^{-1} ( \ee )$ 
is parametrized}

{\em  by the choices:
$\xlambda_4 =2$, $\xlambda_6 = 2$,
$\xlambda_7 = 1$, $\xlambda_9 =2$, $ \xlambda_{10} = 1$.
}
\end{center}


Next, it is straightforward to adjust to the $\chi$-framework the calculations 
shown in Sections 3.1 and 3.2, with the little nuisance that the $\chi$-version 
of the arguments in these sections must use a unified notation for the operators 
$A_k^{( \xsigma )}$ and $B_k^{( \xsigma )}$ from Notation \ref{def:43}.  The 
end result of all this is a statement very similar to Proposition \ref{prop:49}, 
as follows.
\end{remark}

\begin{proposition}    \label{prop:53}
Consider the data fixed at the beginning of the present subsection
($d,n,q, \ee, \chi$ and the vectors $u_1, \ldots , u_{2n} \in \bC^d$),
and let $M \in B( \cT_{d;q} )$ be the operator defined in Equation 
(\ref{eqn:51b}).  Then
\begin{equation}     \label{eqn:53xa}
M ( \xivac ) = \sum_{\pi \in \Phi_{\chi}^{-1} ( \ee )} 
\,  \Bigl[ \, q^{\emph{\Cr} ( \pi )} \cdot 
\prod_{  \begin{array}{c}
{\scriptstyle \{ k,h \} \ pair \ in \ \Ht_{\chi} \cdot \pi} \\ 
{\scriptstyle with \ \ee (h) = *, \ \ee (k) = 1}
\end{array} } 
\  \langle u_k, u_h \rangle \, \Bigr] \cdot \xivac .
\mbox{$\ $ \hspace{1cm} $\blacksquare$}
\end{equation}
\end{proposition}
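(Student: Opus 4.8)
The plan is to re-run the proof of Proposition \ref{prop:49} with the labels-to-heights permutation $\Ht_n$ replaced throughout by $\Ht_{\chi}$, and with the operators $A_k^{( \xsigma )}$ and $B_k^{( \xsigma )}$ of Notation \ref{def:43} merged into a single uniform family, so that the derivation becomes insensitive to the fact that the factors of $M$ in (\ref{eqn:51b}) need not alternate between left and right shifts. Concretely, for $\xsigma \in \{ 1, * \}$, a vector $v \in \bC^d$, a side $\star \in \{ \ell , r \}$ and an admissible index $k \in \bN$ (with $k = 1$ forced when $\xsigma = 1$), I would write $C_k^{( \xsigma ), \star} (v)$ for $A_k^{( \xsigma )} (v)$ when $\star = \ell$ and for $B_k^{( \xsigma )} (v)$ when $\star = r$.

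First I would transport the whole of Section 2 to the $\chi$-setting. The bijection $\Phi_{\chi} : \cP_2 (2n) \to \cD_{(1,*)} (2n)$ is defined exactly as in Notation \ref{def:36}, now reading the decorations ``$1$'' and ``$*$'' off the heights prescribed by $\Ht_{\chi}$; the parametrization of $\Phi_{\chi}^{-1} ( \ee )$ by tuples of choices $( \xlambda_h )_{h \in \ee^{-1} (*)}$, and the crossing formula $\Cr ( \pi ) = \sum_{h \in \ee^{-1} (*)} ( \xlambda_h - 1 )$ of Proposition \ref{prop:310}, both go through verbatim, since their proofs used only that the $2n$ boundary points of the rectangle carry distinct heights given by \emph{some} permutation, never the particular shape (\ref{eqn:33a}) of that permutation.

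Next I would reprove the two key lemmas of Section 3.2 in the unified notation. With $\xi_0 := \xivac$ and $\xi_h := [ S_{\chi (h)}^{\ee (h)} (u_h) ] ( \xi_{h-1} )$, the argument of Lemma \ref{lemma:45} applies unchanged: each $\xi_h$ lies in the copy $\cV_{p_h}$ of $( \bC^d )^{\otimes p_h}$ with $p_h$ as in (\ref{eqn:45xe}); a creation factor contributes the single term $C_1^{(1), \chi (h)} (u_h)$, while an annihilation factor acting on $\cV_{p_{h-1}}$ expands as $\sum_{\xlambda = 1}^{p_{h-1}} C_{\xlambda}^{(*), \chi (h)} (u_h)$ with $p_{h-1} = \Choice_{\ee} (h)$; and the Dyck property of $\ee$ (which does not involve $\chi$) forces the resulting index tuples to range exactly over the set $\xLambda$ of (\ref{eqn:42b}). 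This gives
\[
M( \xivac ) = \sum_{( \xlambda_1, \ldots , \xlambda_{2n} ) \in \xLambda}
\bigl[ \, C_{\xlambda_{2n}}^{( \ee (2n) ), \chi (2n)} (u_{2n}) \cdots
C_{\xlambda_1}^{( \ee (1) ), \chi (1)} (u_1) \, \bigr] ( \xivac ).
\]
Then the analogue of Lemma \ref{lemma:46} shows that each summand equals $c \cdot \xivac$, with $c$ a product of factors $\langle u_k , u_h \rangle \, q^{\xlambda_h - 1}$ indexed by the pairs $\{ k, h \}$ of $\Ht_{\chi} \cdot \pi$ having $\ee (h) = *$ and $\ee (k) = 1$; one proves this by tracking, step by step, the tensor $\zeta_h$ (obtained from $\zeta_{h-1}$ by appending a component at one end when $\ee (h) = 1$, or deleting a component $u_k$ when $\ee (h) = *$) and the scalar $c_h$ (unchanged when $\ee (h) = 1$, multiplied by $\langle u_k , u_h \rangle \, q^{\xlambda_h - 1}$ when $\ee (h) = *$), and observing that this recursion reproduces exactly the weighted-Dyck-path construction of the $\pi \in \Phi_{\chi}^{-1} ( \ee )$ encoded by $( \xlambda_h )_{h \in \ee^{-1} (*)}$. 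Combining the two lemmas, rewriting $\prod_h q^{\xlambda_h - 1} = q^{\Cr ( \pi )}$ by the $\chi$-form of Remark \ref{rem:48} (that is, by Proposition \ref{prop:310}), and passing through the bijection $\xLambda \leftrightarrow \Phi_{\chi}^{-1} ( \ee )$ yields the stated formula (\ref{eqn:53xa}).

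The step I expect to be the main obstacle --- though it is really a bookkeeping nuisance rather than a new idea --- is the point in the $\chi$-analogue of Lemma \ref{lemma:46} where a left annihilation and a right annihilation are applied one after the other, a configuration that cannot occur in the alternating case. Here one must check that $A_k^{(*)} (v)$ deletes the $k$-th tensor component counted from the left while $B_k^{(*)} (v)$ deletes the $k$-th counted from the right; that in either case the scalar produced is $\langle (\text{deleted component}) , v \rangle \, q^{k-1}$; and that the set of available indices at step $h$ is $\{ 1, \ldots , p_{h-1} \} = \{ 1, \ldots , \Choice_{\ee} (h) \}$ no matter what pattern of $\ell$'s and $r$'s has been seen so far. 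These facts are precisely what make the recursion for $( \zeta_h , c_h )$, and hence its identification with the construction of $\Phi_{\chi}^{-1} ( \ee )$, genuinely insensitive to $\chi$.
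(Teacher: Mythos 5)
Your proposal is correct and follows essentially the same route as the paper, whose proof of Proposition \ref{prop:53} is exactly the content of Remark \ref{rem:52}: replace $\Ht_n$ by $\Ht_{\chi}$ throughout Section 2, and redo the computations of Sections 3.1--3.2 with a unified notation for the operators $A_k^{( \xsigma )}$ and $B_k^{( \xsigma )}$. Your write-up is in fact more explicit than the paper's (notably in isolating the consecutive left/right annihilation configuration and in noting that $p_{h-1} = \Choice_{\ee}(h)$ depends only on $\ee$ and not on $\chi$), but the underlying argument is the same.
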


Analogously to Remark \ref{rem:49}, we note that $\phivac (M)$ equals the 
scalar that appeared on the right-hand side of (\ref{eqn:53xa}), and moreover: if 
we focus on $\phivac (M)$ (rather than on the vector $M( \xivac )$), it is quite 
easy to extend the two-sided $q$-Wick formula to the case when $\ee$ would be 
picked in $\{ 1,* \}^{2n} \setminus \cD_{(1,*)} (2n)$, in which case we just get 
$\phivac (M) = 0$.

While not directly useful for the present paper, we mention that a non-trivial 
analysis of $M( \xivac )$ can actually be made for some choices of 
$\ee \in \{ 1,* \}^{2n} \setminus \cD_{(1,*)} (2n)$, and also for some choices 
of $\ee \in \{ 1,* \}^m$ with $m$ odd, by using ``incomplete'' pair-partitions 
of $\{ 1, \ldots , m \}$.  This kind of analysis was made in the one-sided 
$q$-Wick case in \cite[Proposition 2.7]{BKS1997} and \cite[Section 3]{EP2003}, and 
is pursued in depth (for several constructions of Wick products) in the recent 
paper \cite{A2017}. 

$\ $

\subsection{Proof of Proposition 1.8.}
In this subsection we fix $d,n \in \bN$ and $q \in (-1,1)$,
and we verify the moment formula stated in Proposition 1.8:
\begin{equation}   \label{eqn:52a}
( \phivac \otimes \phivac ) ( X_{d;q}^n ) 
= \widetilde{P}_n (d,q),
\end{equation}
where $X_{d;q} := \bigl( \, \sum_{i=1}^d 
( L_{i;q} + L_{i;q}^{*} ) \otimes (L_{i;q} + L_{i;q}^{*} ) \, \bigr)^2
\in B( \cT_{d;q} ) \otimes B( \cT_{d;q} )$,
and the polynomial $\widetilde{P}_n$ is as introduced in 
Notation 1.7.  A discussion very similar to the one from 
Remark \ref{rem:414} shows that the Equation (\ref{eqn:intro11}) 
used to define $\widetilde{P}_n (t,u)$ can be re-written as
\begin{equation}   \label{eqn:52b}
\widetilde{P}_n (t,u) = \sum_{\pi, \sigma \in \cP_2 (2n)}
t^{|\pi \vee \sigma |} \, u^{\Cr (\pi) + \Cr ( \sigma ) } .
\end{equation}

In the calculations shown below it will be convenient to denote
$L_{i;q} + L_{i;q}^{*} =: A_{i;q}$, $1 \leq i \leq d$.  The operators 
$A_{1;q}, \ldots , A_{d;q} \in B( \cT_{d;q} )$ go under the name of 
$q$-Gaussian random variables, and one has the following formula
(cf. \cite[Proposition 2]{BS1991}) to evaluate their joint moments 
of length $2n$ with respect to the vacuum-state: for every 
$I : \{ 1, \ldots , 2n \} \to \{ 1, \ldots , d \}$, one has
\begin{equation}   \label{eqn:52c}
\phivac ( A_{I(2n);q} \cdots A_{I(1);q} ) = 
\sum_{ \pi \in \cP_2 (2n), \ \pi \leq Ker (I) }
\ q^{\Cr ( \pi )},
\end{equation}
where $\Ker (I)$ has the same meaning (partition of 
$\{ 1, \ldots , 2n \}$ into level-sets of $I$) as in the proof 
of Lemma \ref{lemma:415}, and the inequality $\pi \leq \Ker (I)$ 
is with respect to the reverse refinement order on $\cP (2n)$.
 
When on the left-hand side of (\ref{eqn:52a}) we replace 
$X_{d;q}$ from its definition and follow the underlying algebra, 
we find that
\[
( \phivac \otimes \phivac ) ( X_{d;q}^n ) 
= \sum_{I : \{1, \ldots , 2n \} \to \{ 1, \ldots , d \}}
\Bigl( \, \phivac ( A_{I(2n);q} \cdots A_{I(1);q} ) \, \Bigr)^2.
\]
In view of (\ref{eqn:52c}), this can be continued with
\[
= \sum_{ I: \{ 1, \ldots , 2n \} \to \{ 1, \ldots , d \} } 
\ \sum_{\substack{ \pi, \sigma \in \cP_2(2n) \\ \pi, \sigma \leq \text{Ker}(I)}} 
\ q^{\Cr (\pi)} \cdot q^{\Cr (\sigma)}  .
\]
The condition ``$\pi, \sigma \leq \text{Ker}(I)$'' is equivalent to 
$\pi \vee \sigma \leq \Ker (I)$, hence when we change the order 
of summation in latter double sum we come to
\begin{equation}   \label{eqn:52d}
= \sum_{\pi, \sigma \in \cP_2(2n)} 
\ q^{\Cr (\pi)+\Cr (\sigma)}  \cdot \ \mid
 \{ I: \{ 1, \ldots , 2n \} \to \{ 1, \ldots , d \} \mid 
\Ker (I) \geq \pi \vee \sigma \} \mid .
\end{equation}
It is immediately seen that the set of $I$'s which has appeared in the 
summation (\ref{eqn:52d}) has cardinality $d^{ | \pi \vee \sigma | }$.
Hence this summation is nothing but the expression recorded in (\ref{eqn:52b}) 
for the value of $\widetilde{P}_n (d,q)$, and this concludes the verification 
of the required formula (\ref{eqn:52a}).

$\ $

\subsection{Some remarks in the case \boldmath{$q=0$}}

$\ $

\noindent
In this subsection we fix $d \in \bN$, we set $q=0$, and we make some remarks
around Proposition 1.1 (which is the special case $q=0$ of our main result, 
Theorem 1.4).  We will continue to use the framework built in Section 3, but, 
consistently with the statement of Proposition 1.1, we will omit the explicit 
occurrence of $q=0$ in notations for operators -- hence we will write 
``$L_i, R_i \in B ( \cT_d )$'' instead of 
``$L_{i;0}, R_{i_0} \in B ( \cT_{d;0} )$'', and such.  It will be moreover 
convenient to denote
\begin{equation}   \label{eqn:53a}
A_i := L_i + L_i^{*} , 
\ \ B_i := R_i + R_i^{*} , \ \ 1 \leq i \leq d.
\end{equation}
Proposition 1.1 is thus concerned with the distribution of the operator 
\begin{equation}   \label{eqn:53c}
T_d :=  A_1 B_1 + \cdots + A_d B_d \in B( \cT_d ).
\end{equation}

\begin{remark}   \label{rem:53a}
The $(2d)$-tuple of operators $A_1, \ldots , A_d, B_1, \ldots , B_d$ is
the prototypical example of bi-free Gaussian system appearing in the 
bi-free central limit theorem from \cite{V2014}.  It can also be treated 
as a ``$(2d)$-tuple of canonical operators'' in the sense of \cite{MN2015}   
(for which purpose one puts them in the form 
$A_i = L_i^{*} ( I + L_1^2 + \cdots + L_d^2)$ and 
$B_i = R_i^{*} ( I + R_1^2 + \cdots + R_d^2)$, 
$1 \leq i \leq d$).  As a consequence, one has an explicit summation formula 
which describes the joint moments of this $(2d)$-tuple, and which can be used 
in order to give a short proof of Proposition 1.1.  The way to think of this 
summation formula is as a special case of ``moment-cumulant formula for bi-free 
cumulants''.  For a detailed presentation of the bi-free cumulant machinery we 
refer the reader to \cite{CNS2015, CNS2015b}, here we only state the formula 
relevant for Proposition 1.1.  

First, a bit of notation: for every $n \in \bN$ let us put
\begin{equation}   \label{eqn:53e}
\BNC_2^{( \alt )} (2n) := \{ \Ht_n \cdot \pi \mid \pi \in \NC_2 (2n) \} ,
\end{equation}
where $\Ht_n$ is the permutation of $\{ 1, \ldots , 2n \}$ introduced in 
Notation \ref{def:33}.  The pair-partitions in $BNC_2^{( \alt )} (2n)$ are precisely 
those which are said to be {\em bi-non-crossing} with respect to the alternating 
$(2n)$-tuple $( \ell, r, \ell, r, \ldots , \ell , r) \in \{ \ell , r \}^{2n}$.
They appear as indexing set in the following formula for alternating joint moments 
of $A_1, \ldots , A_d, B_1, \ldots , B_d$: for every $n \in \bN$ and every 
$(2n)$-tuple $I: \{ 1, \ldots , 2n \} \to \{ 1, \ldots , d \}$ one has
\begin{equation}   \label{eqn:53g}
\phivac \Bigl( A_{I(2n)} B_{I(2n-1)} \cdots A_{I(2)} B_{I(1)} \Bigr) =
\sum_{\pi \in \BNC_2^{( \alt )} (2n)} \ \term (I, \pi ),
\end{equation}
where for $\pi \in \BNC_2^{( \alt )} (2n)$ we put
\[
\term (I, \pi ) := \left\{  \begin{array}{ll}
1, &  \mbox{ if $\pi \leq \Ker (I)$}   \\
0, &  \mbox{ otherwise.}
\end{array}  \right.
\]
\end{remark}

\begin{outline}
{\em (Outline of proof of Proposition 1.1 via Equation (\ref{eqn:53g}).)}

\noindent
We fix an $n \in \bN$ and we will verify that $\phivac ( T_d^n ) = Q_n (d)$, 
where $Q_n$ is the $n$-th semi-meander polynomial from Equation (\ref{eqn:intro5}). 
To this end we start in the same way as in the beginning of the proof of 
Theorem 1.4, and we derive a formula analogous to (\ref{eqn:413yb}) from that
proof, but where we now keep the $A_i$'s and $B_i$'s as they are, rather than 
breaking them in terms of $L_i, L_i^{*}, R_i, R_i^{*}$.  We get 
\begin{equation}   \label{eqn:53f}
\phivac ( T_d^n ) = 
\sum_{\substack{ I : \{ 1, \ldots , 2n \} \to \{ 1, \ldots , d \}      \\ 
                   \text{with}\,I(2k-1) = I(2k), \, 1 \leq k \leq n } } 
\phivac \Bigl( A_{I(2n)} B_{I(2n-1)} \cdots A_{I(2)} B_{I(1)} \Bigr) .
\end{equation}
We next plug Equation (\ref{eqn:53g}) into (\ref{eqn:53f}), and change the order 
of summation in the ensuing double sum.  By working a bit the inside summation 
over $I$ (in a way similar to what we did in Section 3.4 for the proof of 
Theorem 1.4) we come to 
\begin{equation}   \label{eqn:53h}
\phivac ( T_d^n ) = \sum_{\pi \in \BNC_2^{( \alt )} (2n)}
\, d^{| \pi \vee \{ \{ 1,2 \}, \ldots , \{ 2n-1 , 2n \} \} |}.
\end{equation}

Finally, in the summation on the right-hand side of (\ref{eqn:53h}) we perform 
the substitution $\pi = \Ht_n \cdot \widecheck{\pi}$, with 
$\widecheck{\pi}$ running in the set $NC_2 (2n)$ of usual non-crossing
pair-partitions.  In connection to this substitution we also recall (cf. Equation 
(\ref{eqn:33b}) in Notation \ref{def:33}) that we can replace 
$\{ \{ 1,2 \}, \ldots , \{ 2n-1 , 2n \} \}$ as $ \Ht_n \cdot \rho_{2n}$,
where $\rho_{2n}$ is the rainbow pair-partition from Equation (\ref{eqn:intro4}).
The exponent in the general term of the sum on the right-hand side of 
(\ref{eqn:53h}) thus becomes 
\[
| \pi \vee \{ \{ 1,2 \}, \ldots , \{ 2n-1 , 2n \} \} |
= | ( \Ht_n \cdot \widecheck{\pi} ) \vee ( \Ht_n \cdot \rho_{2n} ) |
\]
\[
= | ( \Ht_n \cdot ( \widecheck{\pi} \vee \rho_{2n} ) |
= | \widecheck{\pi} \vee \rho_{2n} |.
\]
Hence the sum in (\ref{eqn:53h}) is turned by the substitution into 
$\sum_{\widecheck{\pi} \in \NC_2 (2n)} d^{| \widecheck{\pi} \vee \rho_{2n} |}$,
which is precisely the formula for $Q_n (d)$.
\hfill  $\blacksquare$
\end{outline}

\begin{remark}   \label{rem:53b} 
{\em (Approach via random matrix model.)}
Yet another proof of Proposition 1.1 can be obtained by taking 
advantage of a random matrix model for semi-meander polynomials 
that was proposed in Section 5.4 of \cite{DFGG1997}.  We outline 
here how this goes.  For every $N \in \bN$, let 
$G_1^{(N)}, \ldots , G_d^{(N)}$ be a $d$-tuple of independent Gaussian 
Hermitian random matrices (as reviewed for instance on pages 368-371 
of Lecture 22 in \cite{NS2006}), and let $\tr_N$ denote the normalized 
trace on complex $N \times N$ matrices.  Equation (5.20) in Section 5.4 
of \cite{DFGG1997} says that for every $n \in \bN$ (and for the 
$d \in \bN$ which is fixed in the present subsection) one has
\begin{equation}   \label{eqn:53bx}
Q_n (d) = \lim_{N \to \infty} \Bigl[ 
\, \sum_{J: \{ 1, \ldots , n \} \to \{ 1, \ldots , d \}} 
\ ( \bE \circ \tr_N )
\bigl( \, G_{J(1)}^{(N)} \cdots G_{J(n)}^{(N)}
G_{J(n)}^{(N)} \cdots G_{J(1)}^{(N)} \, \bigr) \, \Bigl] .
\end{equation}
Note that the the number of terms in the sum on the right-hand side 
of (\ref{eqn:53bx}) is $d^n$, independent of $N$.  It is natural to ask if 
it isn't the case that each of these $d^n$ terms, taken separately, has its 
own limit for $N \to \infty$.  The considerations from Section 4 of the paper 
\cite{S2017} on bi-matrix models (specifically, Theorems 4.10 and 4.13 there) 
assure us that the separate limits of terms do indeed exist; and more precisely, 
for any fixed $J : \{ 1, \ldots , n \} \to \{ 1, \ldots , d \}$ one has
\begin{equation}   \label{eqn:53by}
\begin{split}
\lim_{N \to \infty} 
\ ( \bE \circ \tr_N )
\bigl( \, G_{J(1)}^{(N)} \cdots &G_{J(n)}^{(N)}
G_{J(n)}^{(N)} \cdots G_{J(1)}^{(N)} \, \bigr) \\
&= \phivac ( A_{J(1)} \cdots A_{J(n)} B_{J(1)} \cdots B_{J(n)} ),
\end{split}
\end{equation}
where $A_1, \ldots , A_d, B_1, \ldots , B_d$ are as in Equation 
(\ref{eqn:53a}).  Since $A_i B_j = B_j A_i$ for all $1 \leq i,j \leq d$, the 
right-hand side of Equation (\ref{eqn:53by}) can also be written as 
\begin{equation}   \label{eqn:53bz}
\phivac ( A_{J(1)} B_{J(1)} \cdots A_{J(n)} B_{J(n)} ).
\end{equation}
Finally, summing in (\ref{eqn:53bz}) over all tuples 
$J : \{ 1, \ldots , n \} \to \{ 1, \ldots , d \}$ leads 
to $\phivac \bigl( \, (A_1 B_1 + \cdots + A_d B_d)^n \, \bigr)$;
hence Equation (\ref{eqn:53bx}) implies the required 
formula (\ref{eqn:intro7}) of Proposition 1.1.
\end{remark}

\begin{remark}   \label{rem:53c} 
It would be interesting to know what is the spectrum of the 
operator $T_d$, in particular if the spectrum is an interval.
The spectral radius ( = norm) of $T_d$ goes in a regime of 
``constant times $d$'' (since it is clear that 
$|| T_d || \leq 4d$, while on the other hand 
$|| T_d || \geq || T_d ( \xivac ) || = \sqrt{d + d^2}$).
We note that, despite being a sum of $d$ operators with 
standard free Poisson distribution (where the said distribution 
is supported on the interval $[0,4]$), the operator $T_d$ is 
not positive, for instance
$\langle T_d (e_1\otimes e_2-e_2\otimes e_1), 
e_1\otimes e_2-e_2\otimes e_1\rangle < 0$. 
\end{remark}

$\ $
 
{\bf Acknowledgement.}  A.N. would like to thank Octavio Arizmendi
and Kamil Szpojankowski for useful discussions around the operator 
$T_d$, during the summer of 2016.

\end{document}